\newlength\myheight
\newlength\mydepth
\settototalheight\myheight{Xygp}
\numberwithin{equation}{section}
\theoremstyle{plain}
\newtheorem{thm}[equation]{Theorem}
\newtheorem{lem}[equation]{Lemma}
\theoremstyle{definition}
\newtheorem{defn}[equation]{Definition}
\theoremstyle{remark}\newtheorem{rem}[equation]{Remark}
\theoremstyle{remark}
 \newtheorem*{rem*}{Remark}
\newtheorem*{remarks*}{Remark} 
\newcommand{\R}{\mathbb{R}}
\newcommand{\Z}{\mathbb{Z}}
\newcommand{\Q}{\mathbb{Q}}
\newcommand{\tb}{\ensuremath{\mathrm{tb}}}
\newcommand{\rot}{\ensuremath{\mathrm{rot}}}
\newcommand{\link}{\ensuremath{\mathrm{lk}}}
\newcommand{\order}{\ensuremath{\mathrm{o}}}
\newcommand{\sln}{\ensuremath{\mathrm{sl}}}
\newcommand{\p}{\ensuremath{\partial}}
\newcommand{\A}{\ensuremath{\mathcal{A}}}
\newcommand{\std}{\mathrm{std}}
\title[Non-Simple knots in contact 3-manifolds]{Non-Simple knots in contact 3-manifolds}
\author[Ipsita Datta]{Ipsita Datta $^{1}$}
\address{$^1$Department of Mathematics ETH Z\"{u}rich, Switzerland; \tiny{ipsita.datta@math.ethz.ch}}
\author[Tanushree Shah]{Tanushree Shah $^{2}$}
\address{$^2$Chennai Mathematical Institute, India; \tiny{tanushrees@cmi.ac.in}}
\keywords{Knot theory, Contact topology, Legendrian knots, connect sum } \thanks{\emph{Subjclass[2020]}: 57K10, 57K14, 57K33 }
\begin{document}
\begin{abstract} 
We present new families of examples of non-simple prime Legendrian and transversal knots in tight Lens spaces, which demonstrate that the botany of Legendrians in Lens space is rich. In fact, there are more non-isotopic Legendrians that are topologically isotopic to the $n$-twist knot in a Lens space $L(\alpha, \beta)$, than in $S^3$. We also include connect sum formulas for rational variants of classical invariants, $\tb_\Q$, $\rot_\Q$, and $\sln_\Q$, which indicate that prime knots are the right playground to look for exotic behaviour.
\end{abstract}

\maketitle

\section{Introduction}
Consider a contact $3$-manifold $(M^3, \xi)$. A knot $\Lambda \subset M$ is called {\bf Legendrian} if $\Lambda$ is everywhere tangent to the contact structure $\xi$. Two Legendrians are considered equivalent if they are isotopic through Legendrian knots. The characteristics of the contact structure play a significant role in the classification of Legendrian representatives for a given knot. In some cases, the classical invariants, namely the Thurston-Bennequin number (\tb) and the rotation number (\rot), can completely classify Legendrian representatives up to Legendrian isotopy. In this case, the knot type is referred to as {\bf Legendrian simple} \cite{EF}. 

However, for certain knot types, these classical invariants are insufficient for distinguishing between Legendrian representatives. By defining a combinatorially computable version of Legendrian Contact Homology, Chekanov showed that there exist non-simple Legendrian knots in $S^3$, that is, there exist two Legendrians $\Lambda_1$ and $\Lambda_2$ in $S^3$ that have the same classical invariants $\tb$ and $\rot$, but they are not Legendrian isotopic (see \cite{C1}, \cite{C2}). 
Sabloff extended the definition of a combinatorial LCH to $S^1$-bundles \cite{Sabloff_2003}, Licata to Lens spaces \cite{Licata_2011}, and Sabloff-Licata to Seifert Fibered spaces \cite{Licata-Sabloff_2012}, with universally tight contact structures. A combinatorial description of LCH for Legendrians on connect sums $\#^k (S^1 \times S^2)$ was provided by Ekholm-Ng in \cite{ekholm-ng}.
Sabloff-Licata also provided an example of non-simple Legendrians in Lens spaces in \cite{Licata-Sabloff_2012}. 
These examples are essentially of the type $\Lambda_i \# \Lambda_F$, $i =1,2$, where $\Lambda_i$ are the the $4$-twist knot version of Chekanov's examples, and $\Lambda_F$ is a Legendrian unknot in the lens space $L(\alpha, \beta)$ that is homotopic to a trivial fiber (when $L(\alpha, \beta)$ is expressed as a Seifert fibered space) but wraps around non-trivially around the singular fiber, see Figure~\ref{fig:prime decomposition}. They explicitly work out the $(-5)$-twist knot case.

A knot is called {\bf prime} if it is a non-trivial knot that cannot be written as the connected sum of two non-trivial knots. Notice that the examples in \cite{Licata-Sabloff_2012} are not prime.

The main objective of this paper is to present new examples of non-simple knots in Lens spaces. We give examples of prime non-simple Legendrian knots in Lens spaces that are topologically $(-n)$-twist knots for every integer $n \geq 3$.
\begin{thm}\label{thm:prime legendrian examples}
In each lens space $L(\alpha, \beta)$ with a universally tight contact structure $\xi$, there exist non-isotopic Legendrian prime knots that represent $(-n)$-twist knots, $n\geq 3$, 
but have the same rational classical invariants. There exists one such Legendrian knot for every partition of $n = z+s+2$ for $0 \leq z\leq n-2$.
\end{thm}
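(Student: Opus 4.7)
The plan is to explicitly construct, for each $1 \le l \le n-1$, a Legendrian $\Lambda_l$ in $(L(\alpha,\beta),\xi)$ by interleaving the Chekanov-style twists with a fiber-wrap maneuver. I would draw a front projection that begins with $l$ Chekanov-type twists, then inserts a loop analogous to Licata--Sabloff's Legendrian $\Lambda_F$ that wraps non-trivially around a singular fiber of the Seifert fibration on $L(\alpha,\beta)$, and closes with the remaining $n-l$ twists. This mimics \cite{Licata-Sabloff_2012} except that the fiber wrap is built into the body of the knot rather than attached via a connect sum, giving the result a chance to be prime.

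Two topological claims then need verification. First, $\Lambda_l$ is topologically isotopic to the $n$-twist knot, since the underlying fiber-wrap curve is null-homotopic and can be undone by a (non-Legendrian) ambient isotopy, bringing $\Lambda_l$ to a standard $n$-twist representative inside a Darboux ball. Second, $\Lambda_l$ is prime: because the twist knot is prime in $S^3$ and the Darboux ball embeds in $L(\alpha,\beta)$, an essential sphere realizing a connect-sum decomposition in $L(\alpha,\beta)$ would restrict, after isotoping into a ball containing the knot, to one in $S^3$, contradicting primality of the $n$-twist knot. The hypothesis $n \ge 3$ is what ensures the twist region is nontrivial enough for the argument to go through cleanly and to produce distinct candidates for different $l$.

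Next, I would compute the rational classical invariants $\tb_\Q(\Lambda_l)$, $\rot_\Q(\Lambda_l)$ and $\sln_\Q(\Lambda_l)$ directly from the front diagram. The rational connect-sum formulas established earlier in the paper imply that the fiber-wrap contributes a fixed amount to each invariant independently of its insertion point $l$, so all $\Lambda_l$ share the same rational classical invariants, and similarly for their transverse pushoffs.

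The hard part is to show that $\Lambda_l$ is not Legendrian isotopic to $\Lambda_{l'}$ for $l \neq l'$. For this I would use Licata's combinatorial Legendrian contact homology for universally tight lens spaces \cite{Licata_2011}. By analyzing how the Chekanov--Eliashberg DGA decomposes across the fiber-wrap subarc, one expects the generators and differentials coming from the $l$-side and the $(n-l)$-side twists to contribute asymmetrically to a derived invariant such as the set of augmentations or linearized (co)homology. Extracting a sharp numerical invariant---for instance the number of $\Z/2$-augmentations or the Poincar\'e polynomial of a linearization---that strictly depends on $l$ is where the bulk of the technical work lies, and is the step most likely to demand a careful model computation rather than a soft argument.
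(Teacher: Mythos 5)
Your outline shares the paper's high-level skeleton (construct a family indexed by $l$, check topological type and primality, compute classical invariants, distinguish via linearized LCH), but the step that actually proves the theorem is deferred rather than carried out, and that is a genuine gap. In $S^3$, Chekanov's examples $E(l,n-l)$ and $E(n-l,l)$ are Legendrian isotopic, so whatever invariant you extract must break the $l \leftrightarrow (n-l)$ symmetry in order to produce $n-1$ distinct classes rather than $\lceil n/2\rceil$; saying the two sides ``contribute asymmetrically'' names the desired conclusion, not a mechanism. The paper supplies the mechanism explicitly: in the low-energy LCH of Licata--Sabloff the surgery modifies the \emph{defects} of the regions of $S^2\setminus\Gamma$, which shifts the gradings of the clasp generators by $l$-dependent amounts involving $2\mu\beta/\alpha$ (e.g.\ $|a_2^l|=-2k+2l-3+2\mu(1-\beta/\alpha)$); each diagram then admits a unique augmentation, and the resulting linearized Poincar\'e polynomials are pairwise distinct over the full range $1\le l\le n-1$. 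Without some version of this computation there is no proof, and without the grading asymmetry there is no reason to expect more classes than in $S^3$.

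The construction you propose is also not the paper's, and it is in tension with itself. The paper does not thread a fiber wrap through the body of the knot: it keeps the standard Chekanov-type Lagrangian projection $\Gamma\subset S^2$ and places the surgery torus in a complementary region $R_1$ so that the surgery unknot has linking number zero with the lift of $\Gamma$. That zero linking is exactly what makes $\tb_\Q$ and $\rot_\Q$ independent of the lens space data (via Kegel's surgery formulas) and keeps the knot inside a ball, hence prime and topologically an $n$-twist knot; the lens space is detected only through the defects, i.e.\ through the gradings. Your version inserts a loop that ``wraps non-trivially around the singular fiber,'' which is the mechanism of the \emph{non-prime} Licata--Sabloff examples, where the linking number is $1$ and the classical invariants acquire $\alpha/\beta$-terms. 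You then invoke the connect-sum formulas to control the invariants of a knot you have deliberately arranged not to be a connect sum. Either your wrap is trivial in the complement of the surgery curve, in which case it is unclear that it changes the Legendrian isotopy class at all, or it is not, in which case the classical-invariant computation, the ``contained in a Darboux ball'' step, and hence the primality argument all need to be redone.
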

We also expand on the Licata--Sabloff examples and, following their argument closely, provide examples of non-prime non-simple Legendrians in Lens spaces $L(p,q)$ that are topologically $(-n)$-twist knots for every integer $n \geq 3$.
\begin{thm}\label{thm:licata-sabloff legendrian examples}
In each lens space $L(\alpha, \beta)$ with a universally tight contact structure $\xi$, there exist non-Legendrian isotopic knots, distinct from those in Theorem~\ref{thm:prime legendrian examples}, that represent $(-n)$-twist knots, $n\geq 3$, but have the same rational classical invariants. There exists one such Legendrian knot for every partition of $n = l + (n-l)$ for $1 \leq l \leq \lceil n/2\rceil$.
\end{thm}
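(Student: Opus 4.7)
The plan is to import the Licata--Sabloff argument, replacing their $4$-twist-knot building block with a Chekanov-type family that exists for every $n$-twist knot. The starting ingredient is a family $\{\Lambda^{(n)}_l\}_{1 \le l \le \lceil n/2 \rceil}$ of Legendrian $n$-twist knots in $(S^3, \xi_{\std})$, one for each splitting $n = l + (n-l)$ taken up to the natural symmetry that exchanges the two addends: the $\Lambda^{(n)}_l$ all share the same classical invariants $(\tb, \rot)$ and are pairwise distinguished by linearized Legendrian contact homology. This symmetry is what cuts the count from $n-1$ in Theorem~\ref{thm:prime legendrian examples} down to $\lceil n/2 \rceil$ here, and I would verify it by exhibiting an explicit Legendrian isotopy that swaps the $l$ and $n-l$ halves of a symmetric front.

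Given a universally tight $\xi$ on $L(\alpha, \beta)$, I would then define $K_l \defeq \Lambda^{(n)}_l \# \Lambda_F$, where $\Lambda_F$ is the fiber-like Legendrian unknot used by Licata--Sabloff and depicted in Figure~\ref{fig:prime decomposition}. Since $\Lambda_F$ is topologically an unknot, $K_l$ is topologically the $n$-twist knot. The rational classical invariants are independent of $l$: applying the connect sum formulas for $\tb_\Q$, $\rot_\Q$ and $\sln_\Q$ developed elsewhere in the paper, every invariant of $K_l$ splits as an $l$-independent $\Lambda^{(n)}_l$-contribution (equal to the $S^3$ value, which does not depend on $l$) plus an $l$-independent $\Lambda_F$-contribution.

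To see that the $K_l$ are pairwise non-Legendrian isotopic, I would use Licata's combinatorial LCH for universally tight lens spaces, together with a connect-sum splitting of the DGA into a tensor-product-like decomposition, so that any linearized LCH invariant of $K_l$ factors through the corresponding invariant of the $S^3$ summand $\Lambda^{(n)}_l$. Since the Chekanov-type family is pairwise distinguished in $S^3$ by such invariants, the $K_l$ are distinguished in $L(\alpha, \beta)$. Establishing this connect sum behavior for Licata's DGA --- identifying chord generators across the decomposition and verifying that no lens space chord couples the two factors nontrivially --- is the step I expect to require the most care.

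Finally, the $K_l$ differ from the knots produced in Theorem~\ref{thm:prime legendrian examples} because each $K_l$ admits the manifest nontrivial connect sum decomposition above, whereas the knots in Theorem~\ref{thm:prime legendrian examples} are prime by construction.
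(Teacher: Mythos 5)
Your construction is the same family of knots as the paper's: the paper builds $\Lambda'(l,n-l)$ by a surgery picture and then observes precisely the decomposition $\Lambda'(l,n-l) = E(l,n-l)\,\#\, F$ that you take as your definition, and your explanation of the $\lceil n/2\rceil$ count via the $l \leftrightarrow n-l$ symmetry of $E(l,n-l)$ in $S^3$ matches Remark~\ref{rem: more knots in lens spaces}. Your computation of the classical invariants via the connect sum formulas of Theorem~\ref{thm: tb under connect sum} is also legitimate (the paper instead reads them off Kegel's surgery formulas, using that the surgery unknot has linking number $1$ with the lift of $\Gamma$; either route works).

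The gap is in the distinguishing step. Your argument hinges on a connect-sum splitting of the combinatorial DGA for Legendrians in a universally tight lens space, so that linearized LCH of $K_l$ factors through that of the $S^3$ summand. That result is not available: the Ekholm--Etnyre--Sullivan connect sum behavior of the DGA is established only for $S^3$, and this paper explicitly says it merely \emph{expects} the analogue to hold for the low-energy LCH in Seifert fibered spaces. You flag this as the step requiring the most care, but without it the proof does not close, and proving it is not obviously easier than the direct computation. The paper avoids the issue entirely: it computes the low-energy LCH of $\Lambda'(l,n-l)$ directly in $L(\alpha,\beta)$ from the labelled Lagrangian projection with its defects --- gradings (well-defined only modulo $2\mu\beta/\alpha$ here, since the natural formal capping surface has defect $\beta/\alpha$), the unique augmentation sending the degree-$0$ generators $b_{2k+4},\dots,b_{4k+3}$ (resp.\ $b_{2k+5},\dots,b_{4k+5}$) to $1$, and the resulting linearized Poincar\'e polynomials, which are pairwise distinct for $l \le \lceil n/2\rceil$. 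If you want to keep your connect-sum strategy, a cleaner way to close the gap is to invoke the Etnyre--Honda uniqueness of Legendrian prime decompositions (their Theorem~3.4), which would let you conclude $K_l \not\simeq K_{l'}$ directly from $E(l,n-l) \not\simeq E(l',n-l')$ without any DGA splitting; that is exactly how the paper handles the analogous step in the proof of Theorem~\ref{thm:cable of torus knot}, but it is not what you wrote, and it is not the route the paper takes for this theorem.
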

Notice that the number of non-isotopic prime Legendrians in Theorem~\ref{thm:prime legendrian examples} is higher than the corresponding number in $S^3$ (see \cite{etnyre-ng-vertesi}) and in Theorem~\ref{thm:licata-sabloff legendrian examples}, see Remark~\ref{rem: more knots in lens spaces}.
This is an interesting indication that Legendrians can pick up information about the global contact structure.

 A smooth knot $K \subset M$ is called a {\bf transverse knot} if it is everywhere transverse to the contact planes, i.e.
  \[
  T_pK \oplus \xi_p = T_pM \quad \text{for all } p \in K.
  \]
Two transverse knots are said to be {\bf transversely isotopic} if there exists a smooth isotopy through transverse knots taking one to the other.
A topological knot type $\mathcal{K}$ is called {\bf transversely simple} if transverse isotopy classes of $\mathcal{K}$ are classified uniquely by their classical invariant, the self-linking number $\operatorname{sl}$. Else, it is called {\bf transversely non-simple}. 

  We present large classes of Legendrian and transversely non-isotopic cables. We use the fact that simplicity for transverse knots can be studied by studying the corresponding Legendrian representatives.

\begin{thm}\label{thm:cable of torus knot}
    Consider the lens space $L(\alpha, \beta)$ with a universally tight contact structure $\xi$. There exist cable knots that are both Legendrian and transversely non-simple.
\end{thm}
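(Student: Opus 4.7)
The plan is to adapt the Etnyre--Honda strategy for cables of torus knots in $S^3$ to the lens space setting. First I would fix a Legendrian torus knot $T$ sitting on the Heegaard torus of $(L(\alpha,\beta),\xi)$, take a standard convex neighborhood $N(T)$, and look at Legendrian realizations of a $(p,q)$-cable $T_{(p,q)}$ as a Legendrian ruling curve on $\partial N(T)$. The convex surface classification of tight contact structures on the solid torus and its lens-space complement (Honda, together with the extensions used in the earlier sections of this paper and in \cite{Licata-Sabloff_2012}) gives a finite but nontrivial collection of dividing set configurations on $\partial N(T)$ realizing the cabling slope. Each such configuration produces at least one Legendrian representative of $T_{(p,q)}$, and by varying the stabilization data of the underlying $T$ inside $N(T)$ I obtain a finite list of candidate representatives $\Lambda_1,\dots,\Lambda_k$.

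Next I would compute the rational classical invariants. The quantity $\tb_\Q(\Lambda_i)$ depends only on the cabling slope measured against the rational Seifert framing of $T_{(p,q)}$, so it is the same across all representatives obtained at a fixed slope. For $\rot_\Q$ I would use the cabling/connect-sum formulas for $\rot_\Q$ recalled earlier in the paper together with the stabilization count inherited from $T$, and choose pairs $(\Lambda_i,\Lambda_j)$ whose stabilization profiles along $T$ are matched so that the rotation contributions cancel identically. This gives a pair (in fact a family, one for each partition $n=l+(n-l)$ as in Theorems \ref{thm:prime legendrian examples} and \ref{thm:licata-sabloff legendrian examples}) with matching $\tb_\Q$ and $\rot_\Q$.

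The main obstacle is proving that these candidates are genuinely non-isotopic as Legendrians. For this I would run a bypass/thickening obstruction in the style of Etnyre--Honda: if $\Lambda_i\simeq \Lambda_j$ through Legendrians, the ambient contact isotopy would carry the convex torus $\partial N(T)$ supporting $\Lambda_i$ to one supporting $\Lambda_j$ with a compatible dividing set, contradicting the classification of tight contact structures on the knot complement. The subtlety in the lens space case compared to $S^3$ is that one must track both the dividing slope on $\partial N(T)$ \emph{and} the relative Euler class of the induced tight structure on $L(\alpha,\beta)\setminus N(T)$, which is where the extra rigidity (and hence the extra examples relative to $S^3$, cf.\ Remark \ref{rem: more knots in lens spaces}) comes from; alternatively, one can appeal to Licata's combinatorial $\lch$ in $L(\alpha,\beta)$ to distinguish the representatives directly.

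Finally, for the transverse statement I would pass to transverse push-offs $T_i$ of the $\Lambda_i$. The identity $\sln_\Q = \tb_\Q - \rot_\Q$ (whose rational connect-sum/cabling version is established earlier in the paper) guarantees that all $T_i$ share the same self-linking number. To upgrade Legendrian non-simplicity to transverse non-simplicity, I would arrange the construction so that no pair $(\Lambda_i,\Lambda_j)$ becomes Legendrian isotopic after a common sequence of negative stabilizations: this is automatic if the distinguishing invariant used in the previous step is preserved by negative stabilization, which is the case both for the bypass/dividing slope invariant and for the augmentation-based \lch-obstruction. This yields transversely non-isotopic transverse cables with identical $\sln_\Q$, completing the argument.
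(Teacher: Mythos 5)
Your proposal takes a genuinely different route from the paper, and the route you chose leaves the two hardest steps unproved. The paper does not analyze cables of a single torus knot via convex surface theory at all: it first manufactures a Legendrian non-simple knot as a connected sum $T_{-5,3}\# T_{-5,2}$ (the first summand in $S^3$, the second on the Heegaard torus of $L(\alpha,\beta)$), choosing maximal-$\tb$ representatives whose rotation numbers are arranged so that $\Lambda_1^-\#\Lambda_2$ and $\Lambda_1^+\#\Lambda_2'$ share $\tb_\Q$ and $\rot_\Q$ (by Theorem~\ref{thm: tb under connect sum}) but are distinguished by the uniqueness of Legendrian prime decompositions from \cite{Etnyre-Honda_2003_connect_sums}. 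It then cites \cite[Theorem 1.1]{CEM}: a sufficiently positive cable, with slope $q/p>\lceil\omega(K)\rceil$, of a Legendrian non-simple knot is again non-simple. The transverse statement follows because the negative stabilizations $S^-(L_i)$ are still distinguished by their prime decompositions, so Theorem~\ref{thm:classification of transverse equiv to leg upto stabilization} applies.

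There are two concrete gaps in your version. First, the existence step: you assert that the classification of dividing sets on $\partial N(T)$ produces non-isotopic Legendrian cables with matching $\tb_\Q$ and $\rot_\Q$, but this is precisely the delicate content of the Etnyre--Honda cabling analysis (non-thickenable solid tori, etc.), and it fails for many torus knots and cabling slopes, for which the cable is in fact Legendrian simple; you never pin down a $T$ and a slope for which the count of admissible dividing-set configurations exceeds the count of classical-invariant values. Second, and more seriously, your upgrade to transverse non-simplicity is broken: a negatively stabilized Legendrian has $1$ in the image of its differential, hence admits no augmentations, so the linearized $\lch$ obstruction cannot survive negative stabilization; and the dividing-slope/bypass invariant is not ``automatically'' preserved under stabilization either --- becoming isotopic after negative stabilization is exactly how Legendrian non-simple knots turn out to be transversely simple. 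Some invariant that demonstrably persists under $S^-$ is required; the paper's choice (uniqueness of prime decomposition, which is stated up to shuffling stabilizations between summands) is engineered to supply exactly that.
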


\begin{thm}\label{thm:cable of twist knot}
    Consider a Lens space $L(\alpha, \beta)$ with a universally tight contact structure $\xi$. There exist cables of $(-n)$-twist knots that are Legendrian non-simple.
\end{thm}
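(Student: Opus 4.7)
The plan is to bootstrap the non-simplicity of Theorem~\ref{thm:prime legendrian examples} up to cables. Given two non-Legendrian-isotopic prime representatives $\Lambda_1,\Lambda_2$ of the $n$-twist knot in $(L(\alpha,\beta),\xi)$ with matching $\tb_\Q$, $\rot_\Q$, and $\sln_\Q$ (produced in Theorem~\ref{thm:prime legendrian examples}), I would form a $(p,q)$-cable of each inside a standard contact neighborhood $\nu(\Lambda_i)\iso S^1\times D^2$, obtaining Legendrian knots $K_1,K_2$ that are candidate witnesses of non-simplicity of the cabled knot type.

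The first step is to verify that $K_1$ and $K_2$ represent the same topological knot type and share all rational classical invariants. Topologically, a $(p,q)$-cable is determined by the companion knot type and the slope. For the invariants, I would invoke the rational cabling formulas, which express $\tb_\Q(K_i)$, $\rot_\Q(K_i)$, and $\sln_\Q(K_i)$ purely in terms of $p/q$ and the corresponding invariants of $\Lambda_i$; since $\Lambda_1$ and $\Lambda_2$ agree on these, so do $K_1$ and $K_2$.

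The core step is to show $K_1\not\iso K_2$ as Legendrians. I would select the cabling slope $p/q$ to be sufficiently negative relative to the contact framing of $\Lambda_i$, so that a lens-space-adapted version of the Etnyre--Honda style classification of Legendrians in a standardly embedded solid torus applies; this is in the spirit of the convex-surface arguments already used by Licata--Sabloff \cite{Licata-Sabloff_2012}. In this regime, every Legendrian representative of the cable is isotoped inside a standard neighborhood of its companion, and any Legendrian isotopy $K_1\iso K_2$ can be promoted to an ambient contact isotopy taking $\nu(\Lambda_1)$ onto $\nu(\Lambda_2)$, hence $\Lambda_1$ onto $\Lambda_2$. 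Combined with Theorem~\ref{thm:prime legendrian examples}, this contradiction forces $K_1\not\iso K_2$, proving non-simplicity.

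The hardest part will be the neighborhood-uniqueness step in the rational/lens-space setting: showing that a sufficiently negatively sloped Legendrian cable pins down the standard neighborhood of its companion up to contact isotopy. In $S^3$ this is handled via bypass attachments on convex tori inside the standard neighborhood, and I expect the same scheme to port over once slopes are phrased in terms of the rational longitude and the contact framing. Universal tightness of $\xi$ pins down the relevant tight structures on solid tori, so the genuinely remaining issue is ruling out exotic solid tori containing $K_i$ that arise from the ambient topology of $L(\alpha,\beta)$—a check that I would expect to parallel the analysis already required for Theorem~\ref{thm:cable of torus knot}.
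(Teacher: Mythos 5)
Your overall strategy---cable the non-isotopic prime $n$-twist Legendrians of Theorem~\ref{thm:prime legendrian examples} and argue that non-simplicity is inherited by the cable---is exactly the strategy of the paper. The paper, however, disposes of what you call the ``core step'' and the ``hardest part'' in one line by invoking \cite[Theorem~1.1]{CEM}: when the cabling slope satisfies $\frac{q}{p} > \lceil \omega(K) \rceil$, where $\omega(K)$ is the contact width, cables of Legendrian non-isotopic representatives remain non-isotopic (the same mechanism already used in the proof of Theorem~\ref{thm:cable of torus knot}). You instead propose to reprove the neighborhood-uniqueness statement from scratch and then explicitly defer it, so as written the decisive step of your argument is absent rather than established.

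More concretely, your slope condition points in the wrong direction. You ask for the cabling slope to be ``sufficiently negative relative to the contact framing,'' but the regime in which every Legendrian representative of the cable can be arranged to lie on the boundary of a standard neighborhood of a representative of the companion---which is what lets you promote a Legendrian isotopy of cables to a contact isotopy carrying $\Lambda_1$ to $\Lambda_2$---is the regime of sufficiently \emph{large positive} slope, $\frac{q}{p} > \lceil \omega(K) \rceil$. For slopes at or below the width there exist convex tori in the knot complement realizing the cabling slope that do not bound standard neighborhoods of maximal-$\tb$ representatives; this failure is exactly the source of the delicate phenomena for negative and intermediate cabling slopes, and in that regime non-simplicity need not be inherited. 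You should replace ``sufficiently negative'' by the width condition and cite the cabling correspondence rather than leaving it as an expectation. On the positive side, your insistence on verifying that the two cables share the same rational classical invariants (via cabling formulas) addresses a point that the paper's proof leaves implicit, and is worth making explicit.
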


Contact invariants usually behave well under connect sums. We prove rational analogues of connect sum formulas \cite{Etnyre-Honda_2003_connect_sums} for classical invariants $\tb$, $\rot$ and self-linking numbers using first principles and surgery formulas from \cite{kegel_2018}.
\begin{thm}\label{thm: tb under connect sum}
    Consider Legendrian knots $\Lambda_i \subset (M_i, \xi_i)$. Then (rational) Thurston-Bennequin number of the connect sum $\Lambda_1 \# \Lambda_2 \subset M_1 \# M_2$ is given by
    \begin{align}
        \tb_\Q (\Lambda_1 \# \Lambda_2) = \tb_\Q(\Lambda_1) + \tb_\Q(\Lambda_2) + 1.
    \end{align}
     Consider a choice of rational Seifert surfaces $\Sigma_i$ for $\Lambda_i$. The rational rotation number satisfies
    \begin{align}
        \rot_\Q(\Lambda_1\#\Lambda_2, \Sigma_1\#\Sigma_2) = \rot_\Q(\Lambda_1, \Sigma_1) + \rot_\Q(\Lambda_2, \Sigma_2).
    \end{align}
\end{thm}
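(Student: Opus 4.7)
The plan is to prove both identities by constructing an explicit rational Seifert surface for $L_1 \# L_2$ from the $\Sigma_i$, and then carrying out a direct intersection and relative Chern-number computation in which the local picture inside the contact connect sum ball mirrors the classical Etnyre--Honda calculation. Set $r_i \defeq \mathrm{ord}([L_i]) \in H_1(M_i;\Z)$, so $\partial \Sigma_i = r_i L_i$, and let $r \defeq \mathrm{lcm}(r_1,r_2)$, which is the order of $[L_1 \# L_2]$ in $H_1(M_1 \# M_2) \cong H_1(M_1) \oplus H_1(M_2)$.

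The first step is to construct $\Sigma_1 \# \Sigma_2$: take $r/r_i$ parallel pushoffs of $\Sigma_i$, producing embedded surfaces $\tilde\Sigma_i$ with $\p \tilde\Sigma_i = r L_i$, and perform the contact connect sum inside a Darboux ball $B$ that meets each $L_i$ along a Legendrian arc $\alpha_i$. The connect sum matches the $r$ boundary strands of $\tilde\Sigma_1$ on $\p B$ with those of $\tilde\Sigma_2$; the matching is completed by $r$ disjoint bands inside $B$ running parallel to the Legendrian arc $L_1 \# L_2 \cap B$, yielding $\p(\Sigma_1 \# \Sigma_2) = r(L_1 \# L_2)$.

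Next I compute $\tb_\Q$ via $\tb_\Q(L) = (L' \cdot \Sigma)/r$, where $L'$ is the contact framing pushoff. Outside the connect sum region $(L_1 \# L_2)'$ restricts to the contact pushoff of $L_i$, so its algebraic intersection with the $r/r_i$ parallel copies of $\Sigma_i$ contributes $(r/r_i)\cdot r_i \tb_\Q(L_i) = r\,\tb_\Q(L_i)$. Inside $B$, a Darboux model computation -- the rational analogue of the classical right-cusp cancellation -- shows that the pushoff meets each of the $r$ bands transversely in one positive point, contributing $+r$. Summing and dividing by $r$ yields the $\tb_\Q$ formula. For $\rot_\Q$ I use $\rot_\Q(L,\Sigma) = c_1(\xi,\tau)[\Sigma]/r$, where $\tau$ is the trivialization of $\xi$ along $\p\Sigma = rL$ induced by the Legendrian tangent. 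The relative Chern number is additive under the decomposition of $\Sigma_1 \# \Sigma_2$ into the two parts coming from the $\tilde\Sigma_i$ and the $r$ connecting bands in $B$: the $\tilde\Sigma_i$ pieces contribute $r(\rot_\Q(L_1,\Sigma_1) + \rot_\Q(L_2,\Sigma_2))$ in total, while the bands contribute $0$ because $\xi|_B$ is trivializable and the Legendrian framing extends unobstructed across each band. Dividing by $r$ gives the rotation identity.

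The main obstacle is verifying the band-by-band local count inside the Darboux ball: one must check that the $r$ bands contribute exactly $+r$ to the $\tb_\Q$ intersection (not $0$ or some other value) and exactly $0$ to the $\rot_\Q$ Chern number. This is most transparently handled through a front-projection model adapted from Etnyre--Honda, in which the Legendrian connect sum cancels one right cusp per strand on each side and introduces exactly one signed crossing per band into the writhe count. As an independent cross-check, both identities also follow by realising the contact connect sum as a handle attachment and invoking the rational surgery formulas of \cite{kegel_2018} to track the effect on $\tb_\Q$ and $\rot_\Q$.
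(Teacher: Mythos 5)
Your proposal is correct and follows essentially the same route as the paper: a local Darboux-ball computation with the glued rational Seifert surface $\frac{r}{r_1}\Sigma_1 \cup \frac{r}{r_2}\Sigma_2$ (the paper books the $+1$ as the removal of two cusps each contributing $-\frac{1}{2}$ to the rational linking number, which is equivalent to your band-by-band count of $+r$), together with additivity of the relative Euler class for $\rot_\Q$. Your ``independent cross-check'' via the rational surgery formulas of Kegel is precisely the paper's second proof, where the block-diagonal form of the linking matrix $Q$ for the split surgery link yields the same identities.
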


\begin{thm}\label{thm: self-linking number under connect sum}
    Consider transverse knots $K_i \subset (M_i, \xi_i)$. Then the (rational) self-linking number of the connect sum $K_1 \# K_2 \subset M_1\# M_2$ is given by
    \begin{align}
        \sln_\Q (K_1 \# K_2) = \sln_\Q (K_1) + \sln_\Q(K_2) + 1.
    \end{align}
\end{thm}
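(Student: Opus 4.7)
The plan is to reduce this to the Legendrian case already handled in Theorem \ref{thm: tb under connect sum}, via the standard passage between transverse knots and their Legendrian approximations. Recall that for any transverse knot $K \subset (M,\xi)$ one has a Legendrian approximation $L$, and conversely the positive transverse push-off of a Legendrian $L$ produces a transverse knot $T_+(L)$ satisfying, in the integral setting, $\sln(T_+(L)) = \tb(L) - \rot(L)$. My first step is to verify the rational analogue
$$\sln_\Q(T_+(L)) = \tb_\Q(L) - \rot_\Q(L,\Sigma)$$
for a Legendrian $L$ that is rationally null-homologous, with rational Seifert surface $\Sigma$. Since both sides are computed from a trivialization of $\xi$ along $\Sigma$ and the well-known local model comparing a Legendrian with its transverse push-off in a standard neighborhood, this should be an essentially local verification, passing through the rational framework exactly as in the integral one.

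Next I choose Legendrian approximations $L_i$ of $K_i$ in $(M_i, \xi_i)$, each with a rational Seifert surface $\Sigma_i$. The second step is to observe that the connected sum $L_1 \# L_2$, performed inside Darboux balls, is a Legendrian approximation of the transverse connect sum $K_1 \# K_2$, with rational Seifert surface $\Sigma_1 \# \Sigma_2$. This compatibility is the geometric input that glues the two inputs together and should follow because the connect sum operation is carried out inside standard contact balls where the push-off and approximation constructions are purely local.

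Once these two items are in place, the theorem is a one-line calculation using Theorem \ref{thm: tb under connect sum}:
\begin{align*}
\sln_\Q(K_1\# K_2) &= \tb_\Q(L_1\# L_2) - \rot_\Q(L_1\# L_2, \Sigma_1\#\Sigma_2)\\
&= \bigl(\tb_\Q(L_1)+\tb_\Q(L_2)+1\bigr) - \bigl(\rot_\Q(L_1,\Sigma_1)+\rot_\Q(L_2,\Sigma_2)\bigr)\\
&= \sln_\Q(K_1) + \sln_\Q(K_2) + 1.
\end{align*}

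The main obstacle I anticipate is the bookkeeping in the first step: the rational self-linking number is defined via a rational Seifert surface together with a specific framing coming from a trivialization of $\xi$, so one must track how the framings on $\Sigma_1$ and $\Sigma_2$ combine after the connected sum annulus is attached, and check that no stray contribution to the linking appears from the gluing region. If this local analysis turns out to be delicate, the fallback strategy is to avoid the push-off identity entirely and instead redo the argument used for Theorem \ref{thm: tb under connect sum} directly at the level of transverse knots, using the surgery presentation of the connect sum and the rational surgery formulas from \cite{kegel_2018} to compute $\sln_\Q(K_1\# K_2)$ from a diagram, thereby isolating the $+1$ contribution from the gluing handle in the same way as for $\tb_\Q$.
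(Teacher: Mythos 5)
Your proposal is correct and coincides with one of the arguments the paper itself gives: the paper's proof ends by deriving the transverse formula from Theorem~\ref{thm: tb under connect sum} via the push-off identity $\sln_\Q(T^\pm)=\tb_\Q\mp\rot_\Q$, taking care (as you do) that both push-offs have the same sign so that $T(\Lambda_1)\#T(\Lambda_2)$ is a push-off of $\Lambda_1\#\Lambda_2$. The paper additionally offers a direct local Darboux-ball computation and a Kegel surgery-formula computation (your stated fallback), but your main line of reasoning is essentially the paper's third argument, supplemented by the standard Legendrian-approximation step to pass from push-offs to arbitrary transverse knots.
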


We refer to a decomposition the form $\Lambda = \Lambda_1 \# \dots \Lambda_n$ of a Legendrian $\Lambda \subset (M, \xi)$ into prime pieces $\Lambda_i \subset (M_i, \xi_i)$, where $M = M_1 \# \dots \# M_n$, as a {\bf prime decomposition}.
Etnyre and Honda showed in \cite{Etnyre-Honda_2003_connect_sums} that the Legendrian prime decomposition of a Legendrian knot is unique up to moving stabilizations from one component to another and possible permutations of components. 
Additionally, in \cite{ekholm-etnyre-sullivan_2005}, it is shown that the DGA associated to Legendrian knots in $S^3$ behaves ``predictably'' under connect sums. We expect similar results to hold for the (low-energy) LCH when we consider connect sums of Legendrians in other Seifert fibered spaces. This indicates that any ``exotic'' examples of Legendrians cannot come from looking at connect sums and emphasizes the importance of the examples in Theorem~\ref{thm:prime legendrian examples} and Theorem~\ref{thm:cable of torus knot}.

\subsection*{Acknowledgements} 
The authors thank John Etnyre, Marc Kegel, Joan Licata, and Joshua Sabloff for valuable discussions.
The authors thank the organisers, Jyothisman Bhowmick and Samik Basu, of the National Centre for Mathematics workshop titled ``Holomorphic and Topological Methods in Symplectic Geometry (2024),'' as this project began when the authors met at the workshop.  
Currently, the second author receives partial support from the Infosys Fellowship.

\section{Review of construction of lens spaces}
In this section, we review the construction of Lens spaces and other Seifert fibered spaces via contact Dehn surgery. Roughly speaking, one cuts out a solid torus that is a Weinstein neighbourhood of a  Legendrian knot from $(S^3, \xi_\std)$ and glue in another such solid torus in a different way to obtain a new $3$-manifold. One can do this iteratively, or along a Legendrian link, to obtain a Seifert fibered space.

\begin{defn}
Let $K \subset M^3$ be a smooth knot in a closed $3$-manifold. Let $r$ on $\partial(\nu K)$ be a non-trivial simple closed curve and a diffeomorphism 
\begin{align}
    \phi: \partial(S^1 \times D^2) \to \partial(\nu K) \text{ which takes the meridian } \mu_0 := \{\mathrm{pt} \times S^1\} \mapsto r.
\end{align}
Then define 
\begin{align}
    M_K(r) := S^1 \times D^2 \sqcup (M \setminus {\nu K}^\circ)/ \sim,\quad \partial(S^1 \times D^2) \ni p \sim \phi(p) \in \partial(\nu K).
\end{align}
We say that $M_K(r)$ is obtained out of $M$ by {\bf Dehn surgery} along $K$ with {\bf slope} $r$.
\end{defn}

For a Legendrian $\Lambda \subset (M^3, \xi)$, there is an embedded copy of tubular neighbourhood $S^1 \times D^2 \subset (J^1(S^1), \xi_\std
)$ of the zero section where the zero section is identified with $\Lambda$. Let us refer to this neighbourhood as a {\bf Weinstein torus} around $\Lambda$ and denote it by $\nu \Lambda$.

Then there is a distinguished longitude, referred to as the {\bf contact longitude} $\lambda_C$ on $\partial (\nu \Lambda)$, given by pushing $\Lambda$ in a direction transverse to the contact planes, for example, in the direction of the Reeb vector field. The {\bf meridian} $\mu$ refers to a simple closed curve on $\partial (\nu \Lambda)$ of the form $\{\mathrm{pt}\} \times \partial D$.

If we write the Dehn surgery slope as
\begin{align}
     r = \alpha\mu + \beta \lambda_c,
 \end{align}
the topological type of the surgered manifold $M_\Lambda(r)$ is determined by the {\bf contact surgery coefficient} $r_c = \alpha/\beta \in \Q \cup \{\infty\}$.
\begin{thm}[\cite{kegel_2018}]
    The surgered manifold $M_\Lambda(r)$ carries a (non-unique) contact structure $\xi_\Lambda(r)$, which coincides with the old contact structure $\xi$ on $M\setminus \nu \Lambda^\circ$.
\end{thm}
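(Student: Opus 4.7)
The plan is to combine convex surface theory with the classification of contact structures on solid tori. The contact structure on $M \setminus \nu\Lambda^\circ$ is just the restriction of $\xi$, so all the work lies in constructing an extension across the new solid torus $V := \SD$ glued in via $\phi$, and then gluing that extension to $\xi$ along $\p(\nu\Lambda)$.

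First I would shrink $\nu\Lambda$ slightly so that its boundary $T := \p(\nu\Lambda)$ is a convex torus with respect to $\xi$; since $\nu\Lambda$ is modelled on a standard neighbourhood of the zero section of $J^1(S^1)$, one can arrange the dividing set $\Gamma_T$ to consist of two parallel curves in the homology class of the contact longitude $\lambda_C$. The complement $N := M \setminus \nu\Lambda^\circ$ then inherits a contact structure from $\xi$ with convex boundary $T$.

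Identifying $\p V$ with $T$ via $\phi$, the dividing set $\Gamma_T$ pulls back to a pair of parallel curves on $\p V$ whose slope, in the natural meridian--longitude coordinates on $V$, is determined by the surgery slope $r$. The main step is to produce a contact structure on $V$ realising this convex boundary data. For this I would appeal to the Honda--Giroux classification of contact structures on solid tori: for any prescribed non-meridional slope of dividing curves on $\p V$ there exists a (universally tight) contact structure on $V$ with that convex boundary, while in the meridional case an overtwisted extension is always available, for example by a direct construction using a model $1$-form of the form $\sin(f(\rho))\, d\theta + \cos(f(\rho))\, d\varphi$ on $V$ with suitable $f$.

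Finally I would glue $\xi|_N$ to the chosen contact structure on $V$ along $T$ using Honda's gluing lemma for convex surfaces: after a $C^\infty$-small perturbation making the two induced characteristic foliations on $T$ coincide (possible because they are adapted to the same dividing set), the two pieces fit together to give a contact structure $\xi_\Lambda(r)$ on $M_\Lambda(r)$ agreeing with $\xi$ on $N$. The non-uniqueness is then manifest, since distinct tight fillings of $V$, overtwisted fillings, and additional Lutz twists along $V$ all yield valid extensions. The principal obstacle in this programme is the final gluing step, which requires the characteristic foliations to match on the nose rather than merely up to isotopy; this is precisely what the convex gluing lemma delivers, and is also where one must be careful that the meridional-slope case does not force a loss of tightness on $V$ alone.
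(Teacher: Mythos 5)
The paper offers no proof of this statement---it is quoted from Kegel's work (ultimately Ding--Geiges), where it is established essentially exactly as you propose: shrink $\nu \Lambda$ to a standard neighbourhood with convex boundary whose dividing curves are parallel to $\lambda_C$, fill the reglued solid torus using the Giroux--Honda classification (a tight, indeed universally tight, filling for non-meridional dividing slope, and an explicit Lutz-type rotationally symmetric model in the exceptional meridional case $\alpha = 0$), and glue after matching characteristic foliations by Giroux flexibility. Your argument is correct, including the observation that non-uniqueness comes from the different choices of tight or overtwisted fillings of the solid torus.
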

The contact manifold $(M_\Lambda(r), \xi_\Lambda(r))$ is said to be obtained from $(M, \xi)$ by {\bf contact Dehn surgery} along the Legendrian knot $\Lambda$ with slope $r$. 

We consider $S^3$ with the co-oriented standard tight contact structure, $\xi_{\std}$. 
A Lens space $L(\alpha, \beta)$ can be obtained by contact surgery on $(S^3, \xi_\std)$ along a Legendrian unknot with slope $r = \alpha/\beta$. The following theorem says that the contact structure on $L(\alpha, \beta)$ can be chosen to be universally tight.
\begin{thm}[\cite{honda_2000_classification_tight_1}]
 There are exactly two tight contact structures on $L(\alpha, \beta)$ with $\beta \neq p-1$ which are universally tight. If $\beta = \alpha - 1$, there is exactly one.   
\end{thm}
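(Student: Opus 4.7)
The plan is to follow Honda's convex-surface-theoretic framework for tight contact structures on lens spaces and then isolate the universally tight sub-collection. First I would split $L(\alpha, \beta)$ along a Heegaard torus $T$ into two solid tori $V_1, V_2$ and put $T$ in convex position. Each $V_i$ can be further decomposed into ``basic slices'' $T^2 \times I$ whose dividing slopes change by a single Farey edge, and the resulting Farey path is controlled by the negative continued fraction $-\alpha/\beta = [r_0, r_1, \dots, r_k]$ with $r_i \leq -2$.

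Second, using Honda's bypass analysis, each basic slice admits exactly two sign choices, and gluing the slices subject to the Giroux--Honda shuffling relations produces all tight contact structures on $L(\alpha, \beta)$, giving the standard count of $\prod_{i=0}^{k} |r_i + 1|$ isotopy classes. To pick out the universally tight structures I would pull back along the universal cover $\widetilde{L(\alpha, \beta)} = S^3$; since Eliashberg's theorem gives $\xi_{\std}$ as the unique tight structure on $S^3$, a tight lift forces the sign sequence on the bypass decomposition to be constant. This leaves at most two candidates: the ``all $+$'' and ``all $-$'' sequences.

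Third, in the generic case where at least one $r_i < -2$, I would show these two candidates are non-isotopic by distinguishing them via an invariant such as the induced $\mathrm{Spin}^c$-structure or the Euler class of the contact plane field, and observe that they are interchanged by the conjugation $\xi \mapsto -\xi$. This gives exactly two universally tight structures on $L(\alpha, \beta)$ when $\beta \neq \alpha - 1$. Finally, in the case $\beta = \alpha - 1$, a short recursive computation shows $-\alpha/(\alpha-1) = [\underbrace{-2, \dots, -2}_{\alpha-1}]$, so each $|r_i + 1| = 1$ and Honda's count collapses to a single tight contact structure, which is then automatically universally tight.

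The main obstacle is the non-isotopy claim in Step 3: the bypass counting argument alone does not certify that the ``all $+$'' and ``all $-$'' sign patterns yield distinct isotopy classes of contact structures. Pinning this down requires an invariant that genuinely sees the global sign choice, and care is needed in ruling out a self-contactomorphism of $L(\alpha, \beta)$ that swaps the two.
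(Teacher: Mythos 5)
This statement is quoted from Honda's classification paper and the present paper gives no proof of it, so there is no internal argument to compare against; your sketch is, in outline, faithful to Honda's actual proof (convex Heegaard torus, basic-slice/Farey decomposition governed by the continued fraction $-\alpha/\beta=[r_0,\dots,r_k]$, the count $\prod_i\abs{r_i+1}$, and the collapse to a single structure when $\beta=\alpha-1$ since then every $r_i=-2$).

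Two points in your write-up are genuinely incomplete rather than just deferred. First, the equivalence ``universally tight $\Leftrightarrow$ constant sign sequence'' does not follow from Eliashberg's uniqueness on $S^3$ in the direction you use it: knowing that a tight lift must be $\xi_{\std}$ does not by itself constrain the sign sequence downstairs. The actual argument has two separate halves: for a mixed sign sequence one exhibits an overtwisted disc in a finite cover (adjacent basic slices of opposite sign glue to a thickened torus that becomes overtwisted after passing to a cover), and for the two constant sequences one must \emph{exhibit} universal tightness, e.g.\ by realizing them as quotients of $(S^3,\xi_{\std})$ by the free $\Z/\alpha$-action, or as the $S^1$-invariant transverse structures that this paper uses later. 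Second, the non-isotopy of the ``all $+$'' and ``all $-$'' structures when some $r_i<-2$ is exactly where Honda's relative Euler class enters: the sign sequence up to shuffling is shown to be a complete isotopy invariant by evaluating $e(\xi)$ (relative to the Legendrian divides) on the annuli of the decomposition, and the two constant sequences give distinct values precisely when the continued fraction is not all $-2$'s. Your worry about a self-contactomorphism swapping them is the right instinct but is about contactomorphism classes; the theorem as stated (and as used in this paper) is an isotopy statement, for which the Euler-class computation suffices. As a side remark, the hypothesis in the quoted statement should read $\beta\neq\alpha-1$ rather than $\beta\neq p-1$.
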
 
A Lens space has a natural fibration over $S^2$ with one exceptional fiber and is a $3$-manifold with a semi-free $S^1$-action. 

One can iterate this surgery or perform a surgery along a Legendrian link to obtain a more general Seifert fibered surface. One can also begin with a more general $3$-manifold, namely, the manifold obtained by a slope $(1,b)$ surgery on $\Sigma_g  \times S^1$ for any closed genus $g$ surface $\Sigma_g$. The resulting space is a Seifert fibered space with Seifert invariants $(g,b;(\alpha_1, \beta_1), \dots, (\alpha_r, \beta_r))$. Note that $(1,1)$-surgery on $S^2 \times S^1$ gives $S^3$. Note that any Seifert fibered space carries a semi-free $S^1$-action.

The {\bf rational Euler number} of a Seifert fibered space $M$ with Seifert invariants $(g,b;(\alpha_1, \beta_1), \dots, (\alpha_r, \beta_r))$ is 
\begin{align}
    e(M) = -b - \sum_{i=1}^r \frac{\beta_i}{\alpha_i}.
\end{align}
So, for a lens space $L(\alpha, \beta)$
\begin{align}
    e(L(\alpha, \beta)) = -1 -\frac{\beta}{\alpha}.
\end{align}
The (low-energy) Legendrian contact homology (LCH) defined in \cite{Licata-Sabloff_2013} uses an $S^1$-invariant transverse contact structure on the Seifert fibered spaces. The following theorem puts a condition on the existence of such contact structures. In particular, every lens space for $0 < \alpha, 0 < \beta < \alpha -1$ supports such a structure. It is clear that such a structure is universally tight.
\begin{thm}[\cites{kamishima-tsuboi_1991, lisca-matic_2004}]
    On a Seifert fibered space, there exists an $S^1$-invariant transverse contact form if and only if the rational Euler number is negative.
\end{thm}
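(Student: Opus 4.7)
The plan is to identify an $S^1$-invariant transverse contact form on $M$ with a connection 1-form on the principal $S^1$-orbibundle $\pi \colon M \to \Sigma$ whose curvature form on $\Sigma$ is a positive orbifold area form, and then to invoke an orbifold version of Chern--Weil theory to relate the total curvature to the rational Euler number.

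For the forward direction, I would start from an $S^1$-invariant contact form $\alpha$ with $\alpha(T) > 0$, where $T$ denotes the fundamental vector field of the $S^1$-action (nonvanishing because the action on a Seifert fibered space is locally free). Normalize to $\tilde\alpha \defeq \alpha/\alpha(T)$; this is still $S^1$-invariant and satisfies $\iota_T \tilde\alpha \equiv 1$, so Cartan's formula gives $\iota_T d\tilde\alpha = \mathcal{L}_T \tilde\alpha - d(\iota_T \tilde\alpha) = 0$. Hence $d\tilde\alpha$ is basic and descends to a closed orbifold 2-form $\omega$ on $\Sigma$ with $d\tilde\alpha = \pi^*\omega$. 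The identity
\begin{align*}
\alpha \wedge d\alpha \;=\; \alpha(T)^2\,\tilde\alpha \wedge \pi^*\omega
\end{align*}
then shows that $\alpha$ is a contact form if and only if $\omega$ is an orbifold area form on $\Sigma$ of the correct sign. The orbifold Chern--Weil formula identifies $[-\omega/2\pi]$ with the rational Euler class of $\pi$, so $\int_\Sigma \omega = -2\pi\, e(M)$, and the positivity of $\omega$ forces $e(M) < 0$.

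For the converse, given $e(M) < 0$, I would choose any orbifold area form $\omega_0$ on $\Sigma$ with $\int_\Sigma \omega_0 = -2\pi\, e(M) > 0$. The orbifold bundle theory used in the forward direction runs in reverse: such an $\omega_0$ is realized as the curvature of some connection 1-form $\tilde\alpha$ on $\pi \colon M \to \Sigma$, and setting $\alpha \defeq \tilde\alpha$ produces an $S^1$-invariant transverse contact form, since the identity above yields $\alpha \wedge d\alpha = \tilde\alpha \wedge \pi^*\omega_0 > 0$.

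The main technical obstacle is the orbifold bookkeeping near the singular fibers. At the $i$-th singular fiber the $S^1$-action has isotropy $\mathbb{Z}/\alpha_i$, so although $\tilde\alpha$ extends smoothly on $M$ itself, its interpretation as a connection form on the orbibundle, and the existence and smoothness of the quotient 2-form $\omega$ on $\Sigma$, must be unpacked in local uniformizing charts. Verifying that the orbifold Chern--Weil computation returns precisely the rational Euler number $-b - \sum_i \beta_i/\alpha_i$, with the fractional terms $\beta_i/\alpha_i$ coming from the local orbifold correction at each singular fiber, is the substantive step of the argument.
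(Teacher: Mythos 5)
The paper does not prove this statement---it is quoted, with attribution, from Kamishima--Tsuboi and Lisca--Mati\'c---so there is no internal proof to compare against; the relevant comparison is with the argument in those references, and your sketch is exactly that standard argument (the orbifold Boothby--Wang correspondence). The outline is correct: normalizing an invariant transverse contact form by $\alpha(T)$ yields a connection form whose curvature descends to the base orbifold, the identity $\alpha\wedge d\alpha=\alpha(T)^2\,\tilde\alpha\wedge\pi^*\omega$ converts the contact condition into positivity of that curvature, orbifold Chern--Weil converts total curvature into the rational Euler number, and the converse is the prescribed-curvature construction. The two points you defer are indeed where all the content lives, and neither is a gap in the approach. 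First, the orientation bookkeeping: one must fix the orientation of $\Sigma$ induced by the fiber orientation and the orientation of $M$, and note that replacing $\alpha$ by $-\alpha$ leaves $\alpha\wedge d\alpha$ unchanged, so one may assume $\alpha(T)>0$; only then does ``$\omega>0$'' translate into $e(M)<0$ rather than $e(M)>0$ with the convention $e(M)=-b-\sum_i\beta_i/\alpha_i$ used in the paper. Second, for the converse you need that every orbifold area form with total integral $-2\pi e(M)$ actually occurs as a curvature; this follows because any two connection forms on the Seifert fibration differ by a basic $1$-form and $H^2_{dR}$ of the closed $2$-orbifold is detected by integration, and the local uniformizing-chart computation at an exceptional fiber of multiplicity $\alpha_i$ is what produces the fractional term $\beta_i/\alpha_i$. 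With those details filled in, your argument reproduces the cited proof.
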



\section{Classical rational invariants under connect sum}\label{sec:classical invariants under connect sum}
Consider a contact manifold $(M^3, \xi)$ with a tight contact structure. In this section, we compute connect sum formulas for the rational versions of classical invariants. These hold for any tight contact structures, that is, both for universally tight and virtually overtwisted.

A knot $K \subset M$ is said to be {\bf rationally null homologous} if there exists $\order \in \Z$ such that $\order[K] = 0 \in H_1(M;\Z)$, that is, $[K] = 0 \in H_1(M; \Q)$. In this case, we refer to $\order$ as the {\bf order} of $K$. Sometimes, we will use a subscript $\order_M$ if the manifold is important or relevant.

For rationally nullhomologous Legendrian knots in contact $3$-manifolds, one can
generalize the classical invariants to the so-called  rational classical invariants
$\tb_\Q$, $\rot_\Q$ and $\sln_\Q$ (see, for example, \cite{BE}, \cite{baker-grigsby_2009}, \cite{GO}). We first review these definitions.

For a Legendrian knot $\Lambda \in (M, \xi)$, consider a Weinstein torus $\nu \Lambda \subset M$ that is contactomorphic to a small neighbourhood of the $0$-section in the $1$-jet space $(J^1 (\Lambda), \xi_{\std})$. Let the {\bf meridian} $\mu \in H_1(\partial \mu \Lambda)$ denote the class represented by a simple closed curve on $\partial \nu \Lambda$ that is contractible in $\nu \Lambda$.
A {\bf (rational) Seifert longitude} of an oriented rationally nullhomologous knot $\Lambda$
of order $\order$ is a class $r \in H_1(\p \nu K)$ such that
\begin{equation}
    \mu \cdot r = \order \text{ and } r= 0 \text{ in } H_1(M \setminus \nu \Lambda).
\end{equation}
A rational Seifert surface for an oriented rationally nullhomologous knot $\Lambda$ is a
surface $S$ with boundary in $M \setminus \Lambda$ whose boundary represents a (rational)
Seifert longitude $\lambda_S$ of $\Lambda$.
 The Seifert longitude of a rationally nullhomologous knot is unique \cite{durst-kegel-klukas_2016}.   

    The {\bf rational Thurston–Bennequin invariant} of a rationally nullhomologous Legendrian knot $\Lambda$ is defined as
    \begin{align}
        \tb_\Q(\Lambda) = \frac{1}{\order}(\lambda_C \cdot \lambda_S),
    \end{align}
where $\lambda_C$ denotes the contact longitude and the intersection product $\cdot$ is taken in $H_1(M \setminus \nu \Lambda).$

The {\bf rational rotation number} of a rationally nullhomologous Legendrian $\Lambda \subset (M. \xi)$ is given by
    \begin{align}
        \rot_\Q(\Lambda, \Sigma) = \frac{1}{\order}\langle e(\xi, \Lambda), [\Sigma]\rangle = \frac{1}{\order} PD(e(\xi, \Lambda))\cdot[\Sigma]
    \end{align}
    where $e(\xi, \Lambda)$ is the relative Euler class of the contact structure $\xi$ relative to the trivialization given by the positive tangent vector field along the knot $\Lambda$ and $[\Sigma]$ the relative homology class represented by the surface $\Sigma$, and the intersection is taken in $H_1(\partial \nu \Lambda)$.
This trivialization is equivalent to the Reeb push-off within the Darboux ball. The $\rot_\Q$ depends on the choice of Seifert surface \cite{BE}.

The {\bf rational self-linking number} of a smooth knot $K$ is defined as the intersection number
    \begin{align}
        \sln_\Q(K, [\Sigma]) = \frac{1}{\order}[\Sigma]\cdot K'
    \end{align}
    where $\Sigma$ is a rational Seifert surface of $K$ of order $\order$ and $K'$ is a pushoff of $K$ in the direction of a non-vanishing section of $\xi|_\Sigma$.

Connected sum operations can be done in the contact world by choosing Darboux balls $B_i \subset (M_i, \xi_i)$ and taking connected sum such that $B_1 \# B_2$ is again a Darboux ball. By \cite{colin_1997}, tightness is preserved under connect sums. One can define Legendrian connect sums extending the definition of knot connect sums, again by working within a Darboux ball. Figure~\ref{fig:local model for Legendrian connect sum} gives a local model for Legendrian connect sum in terms of the cusps.
By \cite{Etnyre-Honda_2003_connect_sums}, the Legendrian connect sum $\Lambda_1 \# \Lambda_2$ does not depend (up to Legendrian isotopy) on the choice of cusps, balls $B_i$ and the identifying function. 

\begin{figure}[h]
    \centering
    \includegraphics[scale=0.7]{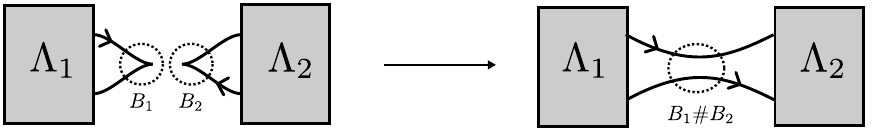}
    \caption{Local model of the connected sum of $\Lambda_1$ and $\Lambda_2$ in the front projection.}
    \label{fig:local model for Legendrian connect sum}
\end{figure}
\begin{lem}
The connect sum $K := K_1 \# K_2$ of two rationally null homologous knots is rationally null homologous in the connect sum manifold $M := M_1 \# M_2$. In particular,
\begin{align}
    \order_M(K) = \mathrm{lcm}(\order_{M_1}(K_1), \order_{M_2}(K_2)).
\end{align}
\end{lem}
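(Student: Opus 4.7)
The plan is to pass to the connect-sum decomposition of $H_1$ and identify the class $[K]$ component-wise. Write the connect sum as $M = N_1 \cup_{S^2} N_2$, where $N_i = M_i \setminus \mathring{B}_i$ for small balls $B_i \subset M_i$ used to form the connect sum, and choose those balls so that each $B_i$ meets $K_i$ in a single unknotted arc $\beta_i$. Then $K = \alpha_1 \cup \alpha_2$, where $\alpha_i = K_i \cap N_i$ is an arc with endpoints on the gluing sphere $S^2$. Closing $\alpha_i$ back up inside $B_i$ by $\beta_i$ recovers $K_i \subset M_i$.

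First I would compute $H_1(M)$. Applying Mayer--Vietoris to $M_i = N_i \cup B_i$ (and using $H_1(S^2) = 0$, $H_1(B_i) = 0$) gives an isomorphism $H_1(N_i) \xrightarrow{\cong} H_1(M_i)$. Applying Mayer--Vietoris to $M = N_1 \cup_{S^2} N_2$ yields
\begin{equation*}
0 \longrightarrow H_1(N_1) \oplus H_1(N_2) \longrightarrow H_1(M) \longrightarrow H_0(S^2) \xrightarrow{\ (1,-1)\ } H_0(N_1) \oplus H_0(N_2),
\end{equation*}
and the rightmost map is injective, so $H_1(M) \cong H_1(M_1) \oplus H_1(M_2)$, both integrally and rationally.

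Next I would identify $[K]$ under this splitting. The collapse map $M \to M_1$ that crushes $N_2$ to a point restricts to the identity on $N_1$, sends $\alpha_1$ to itself, and sends $\alpha_2$ to the basepoint; a null-homotopy in $B_1$ lets us close $\alpha_1$ by $\beta_1$, so this map sends $[K]$ to $[K_1] \in H_1(M_1)$. Symmetrically for $M_2$. Hence, under the Mayer--Vietoris isomorphism, $[K] \mapsto ([K_1],[K_2])$.

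From this identification the conclusion is immediate: $n[K] = 0$ in $H_1(M;\Z)$ iff $n[K_i] = 0$ in $H_1(M_i;\Z)$ for $i=1,2$, iff $\order_{M_1}(K_1) \mid n$ and $\order_{M_2}(K_2) \mid n$. The minimal such $n$ is $\mathrm{lcm}(\order_{M_1}(K_1), \order_{M_2}(K_2))$, proving the formula; in particular $[K] = 0$ in $H_1(M;\Q)$, so $K$ is rationally null-homologous. The only mildly delicate step is the identification $[K] \mapsto ([K_1],[K_2])$, where one must be careful that closing $\alpha_i$ by $\beta_i$ does not alter the homology class (which follows from contractibility of $B_i$).
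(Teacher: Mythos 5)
Your proof is correct and follows exactly the route the paper indicates: the paper's entire justification is the single sentence ``This follows from the Mayer--Vietoris theorem,'' and your argument is simply that Mayer--Vietoris computation carried out in full (the splitting $H_1(M)\cong H_1(M_1)\oplus H_1(M_2)$, the identification $[K]\mapsto([K_1],[K_2])$, and the resulting lcm formula). No discrepancy to report.
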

This follows from the Mayer-Vietoris theorem. 

\begin{proof}[Proof of Theorem~\ref{thm: tb under connect sum}]
    We first prove the $\tb_\Q$ formula. We provide two different proofs here, catering to two ways of looking at the connected sum $M_1 \# M_2$. 
    
    The first perspective involves examining the connected sum in a local Darboux chart, see Figure~\ref{fig:tb connect sum}. As portrayed in Figure~\ref{fig:local model for Legendrian connect sum}, there exist Darboux balls $B_i \subset M_i$ with cusps $C_i$ of $\Lambda_i$. As we can take the connected sum $\Lambda_1 \# \Lambda_2$, the Darboux balls have to be combined so that one of the cusps is an up cusp and the other a down cusp. So, before surgery, the total contribution of these two cusps to the $\tb$ is $-1$. To elaborate, let us denote by $\Lambda'_i$ a pushoff of $\Lambda_i$ is a direction transverse to $\xi_i$. We can assume that within the Darboux balls, this pushoff looks like a translation in the positive $z$-direction. Then the cusps within the Darboux balls each contribute $-1/2$ to the rational linking number. To see this, notice that we may assume within the Darboux ball the rational Seifert surface $\Sigma_i$ of $\Lambda_i$, which we are using to compute the $\tb$, is simply $r_i$ copies of the $x,z$-plane region bounded by $C_i$ in the interior. 
    \begin{figure}[h]
        \centering
        \includegraphics[width=0.7\linewidth]{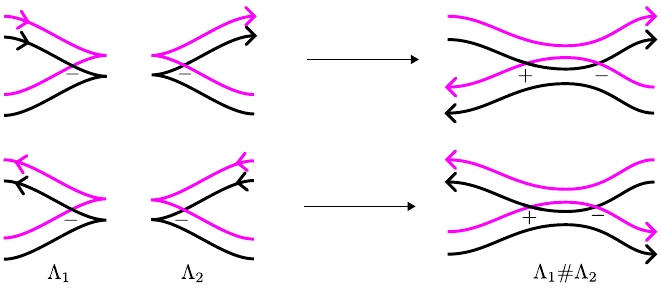}
        \caption{Thurston-Bennequin number in the local model. The transverse pushoffs are in red.}
        \label{fig:tb connect sum}
    \end{figure}So, the contribution to the linking number from within the Darboux ball is 
    \begin{align}
        \frac{1}{r_i^2}\frac{ r_i^2}{2}(\# +\mathrm{ve \, crossings} - \# -\mathrm{ve \, crossings}) =  -\frac{1}{2}.
    \end{align}
    Now we can expand the rational $\tb$'s as
    \begin{align}
        \tb_\Q(\Lambda_1) &= \link_\Q(\Lambda_1, \Lambda'_1) = \frac{1}{2}(-1) + A_1,\\
        \tb_\Q(\Lambda_2) &= \link_\Q(\Lambda_2, \Lambda'_2) = \frac{1}{2}(-1) + A_2,
    \end{align}
    where $A_i = \link_\Q(\Lambda_i, \Lambda'_i) + \frac{1}{2}$ for $i =1,2$ denotes everything in the linking number coming from outside the Darboux balls. Now, we may conclude by observing that the connect sum removes the two cusps within the Darboux ball and leaves everything else untouched. So,
    \begin{align}
        \tb_\Q(\Lambda_1 \# \Lambda_2) & = A_1 + A_2 = \tb_Q(\Lambda_1) + \tb_\Q(\Lambda_2) + 1.
    \end{align}
    
    The second perspective is through surgery diagrams. We can assume that $M_i$ is obtained from $S^3$ by Dehn surgery along links $\Lambda^i = \Lambda^i_1 \sqcup \Lambda^i_2 \sqcup \dots \sqcup \Lambda^i_{n_i}$ with topological surgery coefficients $r_j = \alpha_j/\beta_j$, for $j = 1, \dots, n_i$. Then the connected sum $M_1 \# M_2$ is obtained from $S^3$ by Dehn surgery along the link $\Lambda = \Lambda^1 \sqcup \Lambda^2$ with the same topological surgery coefficients as before. 

    We want to use the formula for rational Thurston-Bennequin invariant from \cite{kegel_2018}, which we briefly describe now. Let $\Lambda_0$ be a Legendrian link in $(S^3 \setminus \nu^\circ \Lambda, \xi_{st}) \subset (S^3, \xi_{st})$. Let $M$, which is obtained from $S^3$ by Dehn surgery along link $\Lambda = \Lambda_1 \sqcup \dots \sqcup \Lambda_n$ with topological surgery coefficients $r_j = \alpha_j/\beta_j$, have a contact structure $\xi$ that coincides with $\xi_{st}$ on $S^3$ outside the neighbourhood of $\Lambda$ identified with the normal bundle $\nu(\Lambda)$. Let $\tb_{old}$ denote the Thurston-Bennequin number of $\Lambda_0$ in $(S^3, \xi_{\std})$ and $\tb_{\Q, new}$ the rational $\tb$ in $(M, \xi)$. Then, the two are related by 
    \begin{align}
        \tb_{\Q, new} = \tb_{old} - \frac{1}{\order}\sum_{i=1}^n a_i \beta_i l_{i0} = \tb_{old} - \frac{1}{\order}\langle \mathbf{a}, \mathbf{\beta l} \rangle,
    \end{align}
    where $\order$ is the order of $\Lambda_0$ in $M$ where it is rationally null homologous, $l_{jk} := \link(\Lambda_j, \Lambda_k)$, and  $\mathbf{\beta l} = (\beta_1l_{10}, \dots, \beta_nl_{n0})$. The numbers $a_i$ are given as follows. Set, 
    \begin{align}
        Q := \begin{pmatrix} \alpha_1 & \beta_2l_{12} & \cdots &\beta_nl_{1n}\\
\beta_1l_{21} & \alpha_2 & &\\
\vdots &&\ddots& \\
\beta_1 l_{n1} &&& \alpha_n
        \end{pmatrix} \text{ and } \mathbf{l} := \begin{pmatrix}
            l_{01} \\ \vdots \\ l_{0n}
        \end{pmatrix}.
    \end{align}
    Then $L_0$ is rationally null homologous in $M$ if and only if there exists $\mathbf{a} := (a_1, \dots, a_n) \in \Z^n$ such that $\order\mathbf{l} = Q\mathbf{a}$ \cite[Lemma 6.1]{kegel_2018}.

    Going back to our connected sum, let us write $\Lambda_i = \Lambda_0^i \subset S^3 \setminus \nu^\circ \Lambda^i$ following the above notation. Then the connect sum $\Lambda_0 = \Lambda_0^1\# \Lambda_0^2 \subset S^3 \setminus \nu^\circ \Lambda$. Let us denote all numbers associated with $\Lambda^i$ by adding a superscript $i$. Keeping that in mind, note that
    \begin{align}
        l_{0j}^i := \link(\Lambda_0^i, \Lambda^i_j) = \link (\Lambda_0, \Lambda^i_j), \quad \text{ and so, }\quad
        \mathbf{l} = (
            \mathbf{l}^1, \mathbf{l}^2). 
    \end{align}
    The links $\Lambda^1$ and $\Lambda^2$ are not linked in $M$, and so, the matrix $Q$ decomposes
    \begin{align}
        Q = \begin{pmatrix}
            Q^1 & 0\\
            0 & Q^2
        \end{pmatrix}.
    \end{align}
    This implies that 
    \begin{align}
        \mathbf{a} = \order Q^{-1} \mathbf{l} = \order\left(            (Q^1)^{-1} \mathbf{l}^1, (Q^2)^{-1} \mathbf{l}^2 \right) = \left(
            \frac{\order}{\order_1}\mathbf{a}^1,\frac{\order}{\order_2}\mathbf{a}^2 \right).
    \end{align}
    Now we compute and expand the $\tb$ of the connect sum to obtain the required formula.
    \begin{align}
        \tb_{\Q, new, M} (\Lambda_0) & = \tb_{old} (\Lambda_0) - \frac{1}{\order}\langle \mathbf{a}, \mathbf{\beta l}\rangle \\
        & = \tb_{old}(\Lambda_0^1) + \tb_{old}(\Lambda_0^2) + 1 - \frac{1}{\order} \left\langle \left(
            \frac{\order}{\order_1}\mathbf{a}^1, \frac{\order}{\order_2}\mathbf{a}^2
        \right), \left(
            \mathbf{\beta l}^1, \mathbf{\beta l}^2
        \right) \right\rangle\\
        & = \tb_{old}(\Lambda_0^1) + \tb_{old}(\Lambda_0^2) + 1 - \frac{1}{d_1}\langle \mathbf{a}^1, \mathbf{\beta l}^1 \rangle - \frac{1}{d_2}\langle \mathbf{a}^2, \mathbf{\beta l}^2 \rangle\\
        & = \tb_{\Q, new, M_1} (\Lambda_0^1) + \tb_{\Q, new, M_2} (\Lambda_0^2) + 1.
    \end{align}

We now prove the rotation number formula. We again provide two different proofs.
Suppose $\Sigma_i$ for $i = 1,2$ are Seifert surfaces for $\Lambda_i$ of order $\order_i$. Then we have rotation numbers
\begin{align}
    \rot_i = \frac{1}{\order_i}\langle e(\xi, \Lambda_1), [\Sigma_i]\rangle.
\end{align}
We can assume that $\Sigma_i \subset M\setminus(\nu\Lambda_1 \cup \nu\Lambda_2)$ and can glue the Seifert surfaces $\Sigma_i$ to get a rational Seifert surface 
\begin{align}
    [\Sigma] = \frac{\order}{\order_1}[\Sigma_1] + \frac{\order}{\order_2}[\Sigma_2]
\end{align}
    for the connect sum Legendrian $\Lambda_1 \# \Lambda_2$. Assuming that the trivializations match with the Reeb direction $\partial_z$ in the Darboux ball, we can ``glue'' together the trivializations over the $\Sigma_i$'s by identifying them in the Darboux ball. Additionally, we can make sure that in a small neighbourhood of the ``neck'' of the connect sum, there is no contribution to $\langle e(\xi, \Lambda_i), [\Sigma_i]\rangle = PD(e(\xi, L_i))\cdot[\Sigma_i]$. Then
    \begin{align}
        \langle e(\xi, \Lambda_1\# \Lambda_2), [\Sigma]\rangle = \langle e(\xi, \Lambda_1), \frac{\order}{\order_1}[\Sigma_1]\rangle + \langle e(\xi, \Lambda_2), \frac{\order}{\order_2}[\Sigma_2]\rangle, 
    \end{align}
    which gives us the required result.

    For the alternate proof, we recall the formula \cite[Lemma 4.5.7]{kegel_2018}
    \begin{align}
        \rot_{\Q, new, \hat\Sigma} = \rot_{old} - \frac{1}{\order}\sum_{i=1}^k a_i \beta_i \rot
    \end{align}
    where $a_i$ and $\beta_i$ are as earlier, and $\rot_i$ denotes the rotation number of $L_i$ in $M$. Let $\mathbf{\beta\rot} = (\beta_1\rot_{1}, \dots, \beta_n \rot_{n})$. As before, let us denote $\Lambda_i = \Lambda_0^i \subset S^3\setminus \nu^\circ \Lambda^i$ and all the numbers associated to $\Lambda^i$ with an $i$ in the superscript.
    The result follows from the following computation of the rotation number of the connect sum
    \begin{align}
        \rot_{\Q, new, M}(\Lambda_0) & = \rot_{old}(\Lambda_0) - \frac{1}{\order}\langle \mathbf{a}, \mathbf{\beta\rot}\rangle\\
        & = \rot_{old}(\Lambda^1_0) + \rot_{old}(\Lambda^2_0) - \frac{1}{\order} \langle\left(\frac{\order}{\order_1}\mathbf{a^1}, \frac{\order}{\order_2}\mathbf{a^2} \right), \left(\mathbf{\beta^1\rot^1}, \mathbf{\beta^2\rot^2}\right)\rangle\\
        &= \rot_{\Q, new, M_1}(\Lambda_0^1) + \rot_{\Q, new, M_2}(\Lambda_0^2).
    \end{align}
    
\end{proof}

\begin{rem}
    Even though, within the small Darboux ball around a cusp, we can choose both trivializations of the normal bundle -- the one by the Seifert pushoff and the one by the transverse pushoff -- to be equal, this is not true globally. These two trivializations need not match even if the ambient manifold is $\R^3$ with the standard contact structure.
\end{rem}

\begin{proof}[Proof of Theorem~\ref{thm: self-linking number under connect sum}]
\begin{figure}[h]
    \centering
    \includegraphics[width=0.5\linewidth]{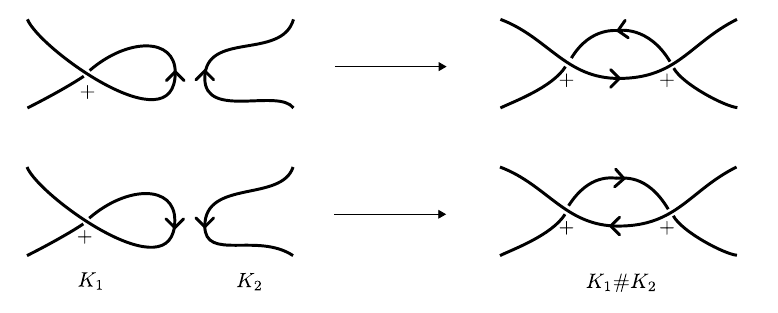}
    \caption{Self-linking number under connect sum}
    \label{fig:self linking number under connect sum}
\end{figure}
     Within a Darboux ball, we can choose the non-vanishing section of $\xi|_\Sigma$. Then, within the Darboux ball, we can track the contributions to the linking number, and the theorem follows; see Figure~\ref{fig:self linking number under connect sum}.

    We can arrive at the same conclusion using Kegel's formula to determine how the self-linking number changes under surgery. Assume $M_i$ are as in the proof of Theorem~\ref{thm: tb under connect sum} and $T_0$ is an oriented transverse knot in the complement of $\Lambda_1 \sqcup \Lambda_2$. Then, for $T_0$ rationally nullhomologous of order $\order$ in $M$ 
    the new self-linking number $\sln_{\Q, new, \hat\Sigma}$ in $(M, \xi)$ in $(M, \xi)$ with respect to a special (rational) Seifert class $\hat\Sigma$ is equal to 
    \begin{align}
        \sln_{\Q, new, \hat\Sigma} = \sln_{old} - \frac{1}{\order} \sum_{i=1}^k a_i \beta_i(l_i \mp \rot_i).
    \end{align}
    The theorem follows from a computation analogous to those in the proof of Theorem~\ref{thm: tb under connect sum}.

    For transverse pushoffs, the theorem is a corollary of Theorem~\ref{thm: tb under connect sum}.
    If $T^\pm$ are $\pm$-transverse push-offs of a Legendrian $\Lambda$, then
    \begin{align}
        \sln_\Q(T^\pm, [\Sigma]) = \tb_\Q(\Lambda) \mp \rot_\Q(\Lambda, [\Sigma]).
    \end{align}
    If the transverse knots $T(\Lambda_i)$ is the transverse pushoff of $\Lambda_i$ and $T(\Lambda_1 \# \Lambda_2)$ is a transverse pushoff of $\Lambda_1 \# \Lambda_2$, then 
    \begin{align}
        \sln_\Q(T(\Lambda_1\#\Lambda_2)) & = \tb_\Q(\Lambda_1 \# \Lambda_2) \mp \rot_\Q(\Lambda_1 \# \Lambda_2)\\
        & = \tb_\Q(\Lambda_1) + \tb_\Q(\Lambda_2) + 1 \mp \rot_\Q(\Lambda_1) \mp \rot_\Q(\Lambda_2)\\
        & = \sln_\Q(T(\Lambda_1)) + \sln_\Q(T(\Lambda_2)) + 1.
    \end{align}
    Note that if we want $T(\Lambda_1) \# T(\Lambda_2)$ to be a transverse pushoff of $\Lambda_1 \# \Lambda_2$, we need $T(\Lambda_i)$ to be either both positive or both negative transverse pushoffs of $\Lambda_i$. Accordingly, $T(\Lambda_1) \# T(\Lambda_2)$ is a positive or negative pushoff of $\Lambda_1 \# \Lambda_2$.
\end{proof}

\section{Non-simple $n$-twist knots}
\label{NST}
In this section, we describe two families of non-simple Legendrian knots in lens spaces, that is, Legendrian knots that have the same rational classical invariants, but are distinguished by their Legendrian contact homologies. The ones of the first type are prime. The ones of the second type are not prime and are an expansion of the examples of torsion knots in \cite[Section~5.2]{Licata-Sabloff_2013}. Both these classes of examples can be seen in a general Seifert fibered space by doing Legendrian surgery along a link that is unlinked with the constructed knot. 

\begin{rem}\label{rem: more knots in lens spaces}
Let $E(z,s)$ denote the Legendrian $(-n)$-twist knot, $n = z=s + 2$, in $S^3$ with Lagrangian projection as in Figure~\ref{fig:prime decomposition}. In \cite{epstein-fuchs-meyer} it was shown that $E(z,s)$ is Legendrian isotopic to $E(z', s')$ if and only if the unordered pairs $\{z, s\}, \{z', s'\}$ are the same. In contrast $\Lambda(z, s)$ is Legendrian isotopic to $\Lambda(z', s')$ in $L(\alpha, \beta)$ if and only if the ordered pairs $(z, s), (z', s')$ are the same. Thus, there are more Legendrian isotopy classes in the same $n$-twist topological isotopy class in lens spaces.
\end{rem}

\subsection{Non-simple prime twist knots}\label{sec: prime twist knots}
We present in this section a proof of Theorem~\ref{thm:prime legendrian examples} by describing the construction of the knots and computing the Poincar\'e polynomials of their LCHs.

Consider an immersed curve $\Gamma$ of the type in Figure~\ref{fig:construction of Legendrian torus knot} in $S^2$. Here, assume that we see $z+1$ half twists on the left-hand side and $s$ half twists on the right-hand side, with $z+s = n-2$. We use the $(Z,S)$ notation from \cite{etnyre-ng-vertesi}. View this $S^2$ as the base of the fibration $S^3 \to S^2$, where each fiber is a Reeb orbit. Consider surgery along a Legendrian unknot $\Lambda_0$ that intersects $S^2$ at the point $p_1$ and the Weinstein torus $\nu \Lambda_0$ for surgery can be chosen such that $\nu \Lambda_0 \cap S^2 \subset R_1$ for the region $p_1 \in R_1 \subset S^2 \setminus \Gamma$. Choose $\Lambda_0$ and $\nu \Lambda_0$, by expanding the curve $\Gamma$ if required, so that the surgered manifold $S^3_\Lambda(r)$ is a lens space $L(\alpha, \beta)$ with a universally tight contact structure. Then $\Gamma$ can be lifted to a Legendrian isotopic to the $(-n)$-twist knot.
\begin{figure}[h]
    \centering
    \includegraphics[width=0.5\linewidth]{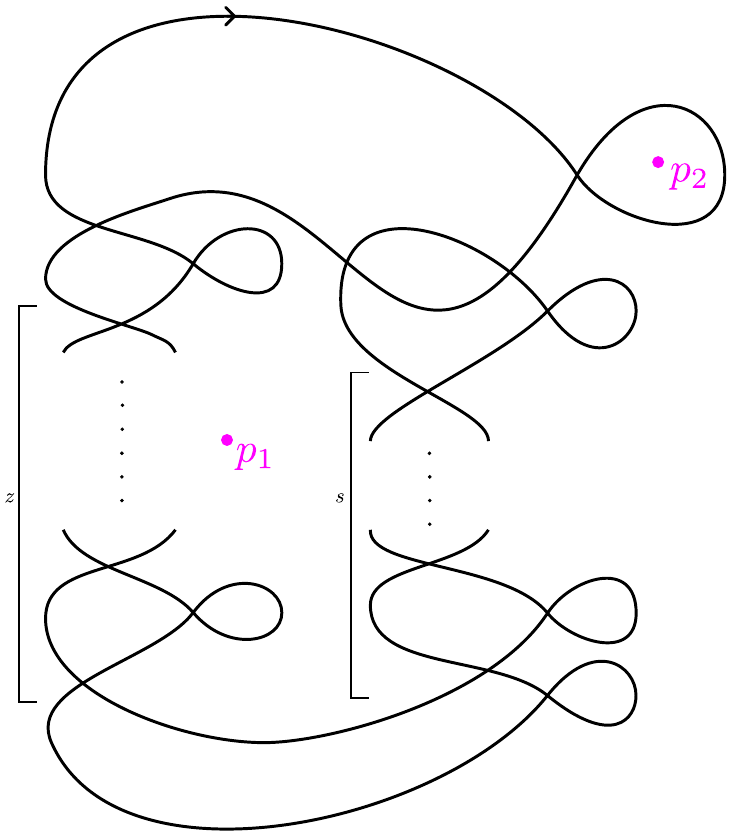}
    \caption{Construction of the $\Lambda(z,s)$ Legendrian $-n$-twist knot}
    \label{fig:construction of Legendrian torus knot}
\end{figure}
In the notation of \cite{Licata-Sabloff_2013}, this knot is represented by the labelled Lagrangian diagram that has ``defects'' given as in Figure~\ref{fig:n twist knot}.

The defect is roughly the ``error'' in bounding a disk; the boundary of a region, closed up using the $\alpha_\pm$ Reeb chords, has. See \cite[Definition~3.3]{Licata-Sabloff_2013} for the precise definition.
These knots are prime as noted in \cite{Gab}.

The classical rational invariants of these knots are (see \cite{kegel_2018})
\begin{align}
    \tb_\Q(\Lambda(z, s)) &= 1  &\text{ for } n \text{ even, }\\
    \tb_\Q(\Lambda(z, s)) &= -3 &\text{ for } n \text{ odd, and }\\
    \rot_\Q(\Lambda(z, s)) & = 0 & \text{ for all }n.
\end{align}
To see this, note that the surgery Legendrian has linking number $0$ with the Legendrian lift of $\Gamma$ in $S^3$ and use surgery formulas from \cite{kegel_2018}.

To show that $\Lambda(z,s)$ for different $(z,s)$ gives distinct Legendrian representatives of the prime $(-n)$-twist knot, we apply Chekanov's technique of linearized homology \cite{C2} to the low-energy LCH described in \cite{Licata_2011} and \cite{Licata-Sabloff_2013}.

Let $\Gamma$ denote the Lagrangian projection of the Lagrangian knot $\Lambda$. A formal capping surface of $\Gamma$ is a vector in $\Z^{|\Sigma \setminus \Gamma|}$, or equivalently, an assignment of an integer to each region of $\Sigma \setminus \Gamma$, that comes from following a variant of Seifert's algorithm \cite[Section~4.2.1.]{Licata-Sabloff_2013}.
As in {\cite[Definition~4.2]{Licata-Sabloff_2013}}, if $S = (c_1, c_2, \dots, c_n)$ is a formal capping surface, the {\bf defect} and {\bf rotation} of $S$ are the sums of the defects or rotations of the regions $R_j$, weighted by multiplicity
 \begin{equation}n(S) = \sum_j c_jn(R_j), \quad r(S) = \sum_j c_j r(R_j).\end{equation} 
 As discussed in \cite{Licata-Sabloff_2013}, the grading is only well-defined in $\Z$ modulo 
\begin{equation}2r(S) + 2\mu n(S),\end{equation}
for $S$ a formal capping surface of $\Gamma$.
Note that for all the $(-n)$-twist knots $\Lambda(z, s)$ we described, we may construct a formal capping surface $S$ such that both the total rotation $r(S) = 0$ and the defect $n(S) = 0$. Hence, the gradings are well-defined (modulo $0$).

We now describe the case of the $(-5)$-twist knots in full detail. Refer to Figure~\ref{fig: 4 twist prime knots} for this computation. 
\begin{figure}[h]
    \centering
    \includegraphics[width=\linewidth]{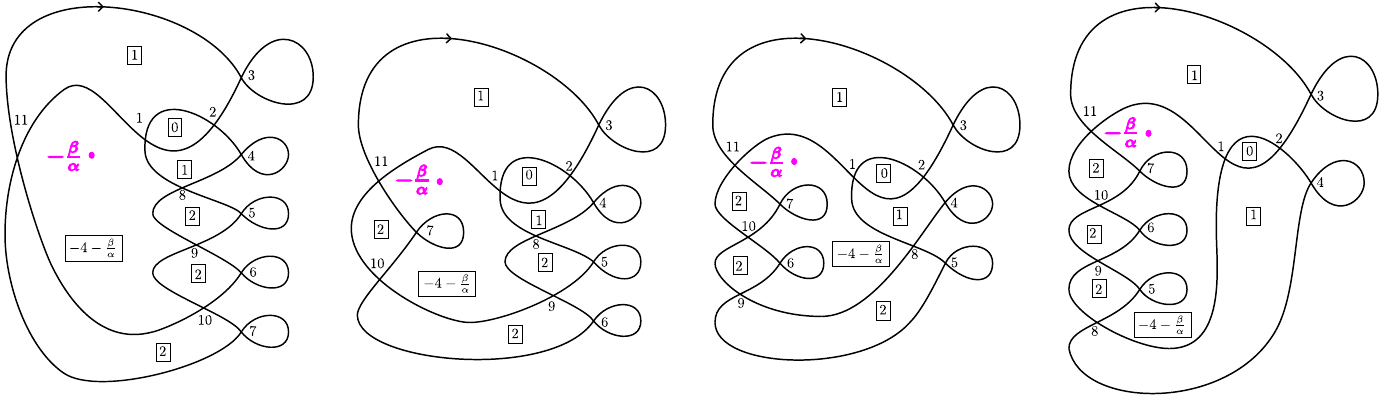}
    \caption{Legendrian representatives (left to right) $\Lambda(0,3)$, $\Lambda(1,2)$, $\Lambda(2,1)$, and $\Lambda(3,0)$ of prime $-5$-twist knot in $L(\alpha, \beta)$. The boxed numbers indicate defects of the respective regions.}
    \label{fig: 4 twist prime knots}
\end{figure}We begin by computing the gradings of the generators. For $\Lambda(0,3)$ the gradings are
\begin{align}
    |a_{3,\dots,7}|&= 1, &|b_{3,\dots,7}|&=2\mu -2,\\
    |a_{8,\dots,11}|&= 2\mu -1, &|b_{8,\dots,11}| &= 0,\\
    |a^0_1| &= -3-2\mu\frac{\beta}{\alpha}, &|b^0_1| &= 2 + 2\mu\left(1 + \frac{\beta}{\alpha}\right).\\
    |a^0_2| &= 3+2\mu \frac{\beta}{\alpha}, &|b^0_2| &= -4 + 2\mu\left(1 - \frac{\beta}{\alpha}\right).
\end{align}
We use superscript equal to $z$ on some of the generators indicate the Legendrian representative. The gradings for the knots $\Lambda(1,2)$ and $\Lambda(2,1)$ agree with those of $\Lambda(0,3)$ for $|a_{3,\dots,7}|, |b_{3,\dots,7}|, |a_{8,\dots,11}|$, and $|b_{8,\dots,11}|$. For the rest, the gradings are as follows. For $\Lambda(1,2)$ the gradings are
\begin{align}
    |a^1_1| &= -1-2\mu\frac{\beta}{\alpha}, &|b^1_1| &= 0 + 2\mu\left(1 + \frac{\beta}{\alpha}\right),\\
    |a^1_2| &= 1+2\mu \frac{\beta}{\alpha}, &|b^1_2| &= -2 + 2\mu\left(1 - \frac{\beta}{\alpha}\right);
\end{align}
for $\Lambda(2,1)$ the gradings are
\begin{align}
    |a^2_1| &= 1-2\mu\frac{\beta}{\alpha}, &|b^3_1| &= -2 + 2\mu\left(1 + \frac{\beta}{\alpha}\right),\\
    |a^2_2| &= -1+2\mu \frac{\beta}{\alpha}, &|b^3_2| &= 0 + 2\mu\left(1 - \frac{\beta}{\alpha}\right);
\end{align}
and for $\Lambda(3,0)$ the gradings are
\begin{align}
    |a^2_1| &= 3-2\mu\frac{\beta}{\alpha}, &|b^3_1| &= -4 + 2\mu\left(1 + \frac{\beta}{\alpha}\right).\\
    |a^2_2| &= -3+2\mu \frac{\beta}{\alpha}, &|b^3_2| &= 2 + 2\mu\left(1 - \frac{\beta}{\alpha}\right).
\end{align}
The next step is to find augmentations of each DGA, i.e., graded algebra maps $\epsilon: \mathcal{A} \to \Z_2$ that vanish on the image of $\partial$ and are supported in grading $0$. For this it is enough to check conditions for $\epsilon$ on the image of $\partial$ on grading $1$ generators. The differentials for $\Lambda(0,3)$ in degree $1$ are
\begin{align}
    \partial a_3 & = 1 + b_{11}, & \partial a_4 & = 1 + b_8,\\
    \partial a_5 & = 1 + b_8b_{9},
    & \partial a_6 & = 1 + b_{9}b_{10},\\
    \partial a_7 & = 1 + b_{10}b_{11},&&\\ \p b_{3, \dots, 7} &= 0,\\
\end{align}
The differentials for $\Lambda(1,2)$, $\Lambda(2,1)$, and $\Lambda(3,0)$ are similar up to rotation of the generators within a term. This means in all four cases, there is a unique augmentation $\epsilon$ that sends $b_8, \dots, b_{11}$ to $1$ and all other generators to $0$.

Now we can linearize the differentials by conjugating by $\phi^\epsilon: \A \to \A$,  $\psi^\epsilon(x) = x + \epsilon(x)$, and taking the linear terms. This gives us, for each of $\Lambda(0,3)$, $\Lambda(1,2)$, $\Lambda(2,1)$, and $\Lambda(3,0)$,
\begin{align}
\p^\epsilon a_{1,2} & 
= 0, & \p^\epsilon b_{1,2} &
= 0, 
 &\p b_{8, \dots, 11} &= 0, &\p b_{3, \dots, 7}& = 0,&&\\
    \p^\epsilon a_3 & = b_{11},  &\p^\epsilon a_4 &= b_8,
    &\p^\epsilon a_5 &= b_8 + b_{9},&&&&\\
    \p^\epsilon a_6 &=  b_{9} + b_{10}, &\p^\epsilon a_7 &= b_{10} + b_{11},&&&&&&\\
   \p^\epsilon a_8 & = b_4 +  b_5, &    \partial a_9 & = b_5 + b_6,&
    \partial a_{10} & = b_6 + b_7, &
    \partial a_{11} & = b_7 + b_3 ,&&
\end{align}
Thus, the Poincar\`e polynomials of the linearized LCHs are
\begin{align}
    p(\Lambda(0,3)) & = t + t^{2\mu -2} + t^{3+2\mu \frac{\beta}{\alpha}} + t^{2 + 2\mu\left(1 + \frac{\beta}{\alpha}\right)} +  t^{-3-2\mu\frac{\beta}{\alpha}} + t^{-4 + 2\mu\left(1 - \frac{\beta}{\alpha}\right)},
\\
    p(\Lambda(1,2)) &= t + t^{2\mu -2} + t^{1+2\mu \frac{\beta}{\alpha}} + t^{2\mu\left(1 + \frac{\beta}{\alpha}\right)}  + t^{-1-2\mu\frac{\beta}{\alpha}} + t^{-2 + 2\mu\left(1 - \frac{\beta}{\alpha}\right)},
\\
    p(\Lambda(2,1)) &= t + t^{2\mu -2} + t^{1-2\mu \frac{\beta}{\alpha}} + t^{2\mu\left(1 - \frac{\beta}{\alpha}\right)}  + t^{-1+2\mu\frac{\beta}{\alpha}} + t^{-2 + 2\mu\left(1 + \frac{\beta}{\alpha}\right)},
 \\
    p(\Lambda(3,0)) &= t + t^{2\mu -2} + t^{3-2\mu \frac{\beta}{\alpha}} + t^{2 + 2\mu\left(1 - \frac{\beta}{\alpha}\right)}  + t^{-3+2\mu\frac{\beta}{\alpha}} + t^{-4 + 2\mu\left(1 + \frac{\beta}{\alpha}\right)}.   
\end{align}
As $\mu \neq 0$ and $\beta/\alpha \neq 0$, we get four distinct polynomials, and therefore, all four of $\Lambda(0,3)$, $\Lambda(1,2)$, $\Lambda(2,1)$, and $\Lambda(3,0)$ are non-isotopic Legendrians.

These computations work more generally for any $(-n)$-twist knot. The gradings differ slightly for odd and even twists.

Let us first consider an even number of twists, specifically the $(-n)$-twist knot, where $n = 2k, k\geq 2$.

We always get $2n+1$ generators. Let us label them as in Figure~\ref{fig:n twist knot}, namely, $1$ and $2$ for the crossings at the clasp, $3, \dots, n+2$, for crossings at the tear drops, and $n+3, \dots, 2n+1$, for the crossings from the twists. 
Then we get gradings for $\Lambda(z, s)$, $0 \leq  z \leq n-2$, $z+s = n-2$, 
\begin{align}
    |a_{3, \dots, n+2}| & = 1, &|b_{3, \dots, n+2}| &= 2\mu-2,\\
    |a_{n+3, \dots, 2n+1}| & = 2\mu - 1, &|b_{n+3, \dots, 2n+1}|&=0,\\
    |a_1^z| & = s-z+3, &|b_1^z| &= z-s+2 + 2\mu,\\
    |a_2^z|& = z-s-1 + 2\mu\left(1-\frac{\beta}{\alpha}\right), &|b_2^z| &= s-z + 2\mu\frac{\beta}{\alpha}.  
\end{align}
In each case, there exists a unique augmentation that sends the generators $b_{n+3}, \dots, b_{2n+1}$ to $1$ and the rest of the generators to zero. Once we linearize the differential using this augmentation, the corresponding Poincar\'{e} polynomial is given by
\begin{align}\label{eqn:poincare polynomial prime twist leg}
      P(\Lambda(z,s)) = t^1 + t^{2\mu -2} + t^{|a^z_1|} + t^{|b^z_1|} +  t^{|a^z_2|} + t^{|b^z_2|}.
\end{align}
Clearly, the Poincar\'e polynomials are all distinct for distinct $z$ as long as $\beta/\alpha$ are nonzero. Hence, each $\Lambda(z, s)$ is a distinct Legendrian $(-n)$-twist knot in $L(\alpha, \beta)$, for $n \geq 4$ even.
\begin{figure}[h]
    \centering
    \includegraphics[width=\linewidth]{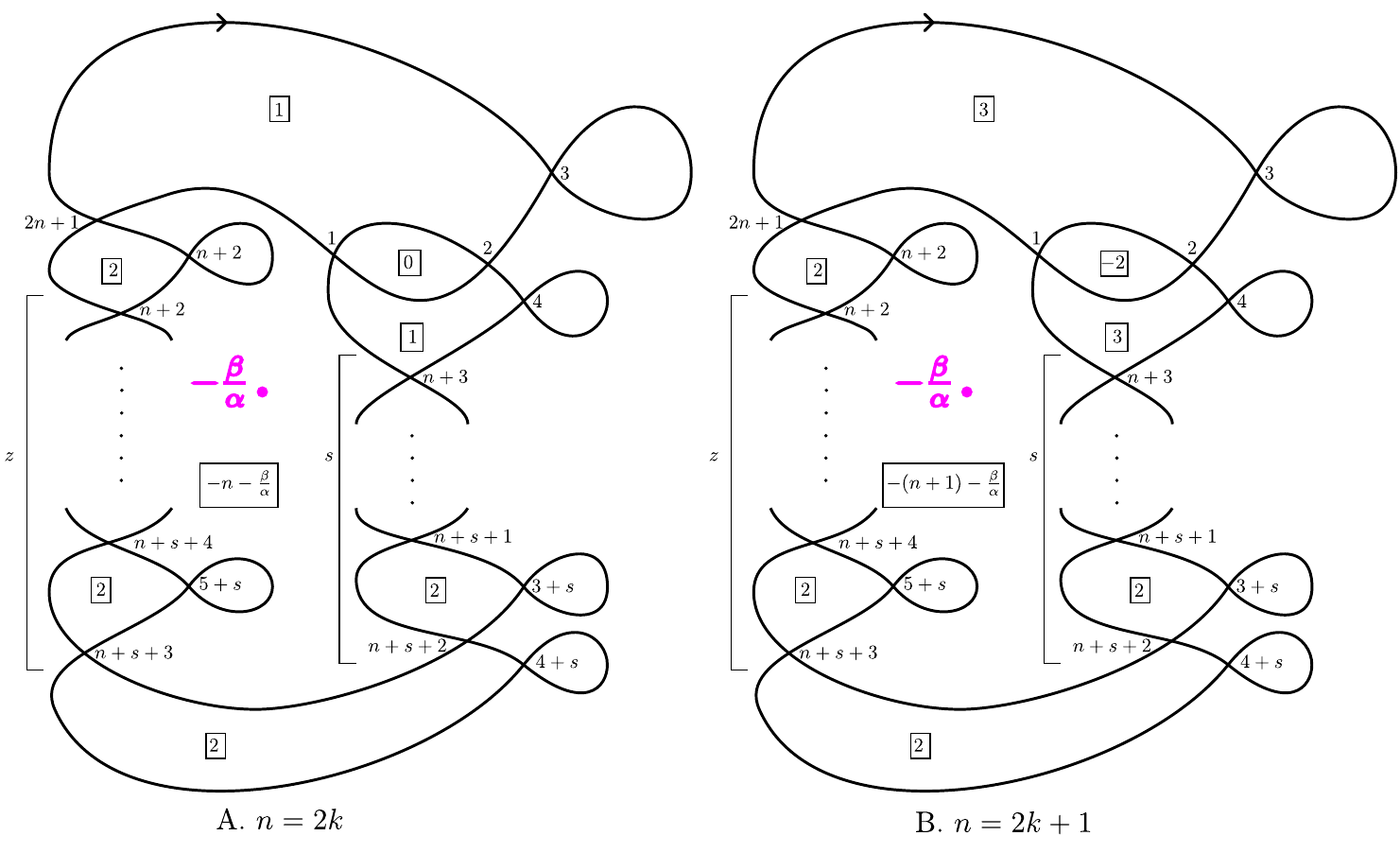}
    \caption{Legendrians $\Lambda(z, s)$ for $z+s = n-2$. The boxed numbers indicate defects of the respective regions.}
    \label{fig:n twist knot}
\end{figure}

Now, we consider an odd number of twists, specifically the $(-n)$-twist knot, where $n = 2k+1, k \geq 1$. 
We again get $2n+1$ generators, labelled as we did in the $n$ even case, as in Figure~\ref{fig:n twist knot}. 
For $\Lambda(z,s)$, $0\leq z \leq n-2$, $z+s = n-2$, the gradings are
\begin{align}
|a_{3, \dots, n+2}| & = 1, &|b_{3, \dots, n+2}| &= 2\mu-2,\\
    |a_{n+3, \dots, 2n+1}| & = 2\mu - 1, &|b_{n+3, \dots, 2n+1}|&=0,\\
    &|a_1^z| = z-s-2\mu\frac{\beta}{\alpha}, &|b_1^z| &= s-z-1 + 2\mu\left(1 + \frac{\beta}{\alpha}\right).\\
    &|a_2^z| = s-z+2\mu \frac{\beta}{\alpha}, &|b_2^z| & = z-s-1 + 2\mu\left(1 - \frac{\beta}{\alpha}\right).
\end{align}
Again, in each case, there exists a unique augmentation that sends the generators $b_{n+3}, \dots, b_{2n+1}$ to $1$ and the remaining generators to $0$. Once we linearize the differential using this augmentation, the corresponding Poincar\'{e} polynomials are given by
\begin{align}
      P(\Lambda(z,s)) = t^1 + t^{2\mu -2} + t^{|a^z_1|} + t^{|b^z_1|} +  t^{|a^z_2|} + t^{|b^z_2|},
\end{align}
which are distinct for distinct $z$ as long as $\beta/\alpha$ is nonzero. Hence, each $\Lambda(z, s)$ is a distinct Legendrian $(-n)$-twist knot in $L(\alpha, \beta)$ for $n$ odd.

\subsection{Non-simple non-prime twist knots}\label{sec: non prime twist knots}
In this section, we expand on the pair of torsion knots from \cite{Licata-Sabloff_2013}, giving a proof of Theorem~\ref{thm:licata-sabloff legendrian examples}. We describe their construction in a slightly different way. Repeat the same construction as in Section~\ref{sec: prime twist knots} but with a Legendrian $\Lambda_0$ such that it intersects the base $\Lambda_0 \cap S^2 = \{p_2\}$ and has Weinstein torus such that $\nu \Lambda_0 \cap S^2 \subset R_2$. Then $\Gamma$ can be lifted to a Legendrian isotopic to the $(-n)$-twist knot.

In the notation of \cite{Licata-Sabloff_2013}, this knot is represented by the labelled Lagrangian diagram that has ``defects'' given as in Figure~\ref{fig:n twist licata sabloff examples}. This gives the same Legendrians as \cite{Licata-Sabloff_2013} for $n = 5$. These are not prime knots, as they can be written as
\begin{align}
    \Lambda'(z, s) = E(z, s) \# F, \quad z+s = n-2,
\end{align}
where $E(z, s)$ is a $(-n)$-twist Legendrian in $(S^3, \xi_\std)$ and $F$ is a Legendrian isotopic to a regular fiber in $L(\alpha, \beta)$ with Lagrangian projection as in Figure~\ref{fig:prime decomposition}.

\newpage
\begin{figure}[h]
    \centering
    \includegraphics[width=\linewidth]{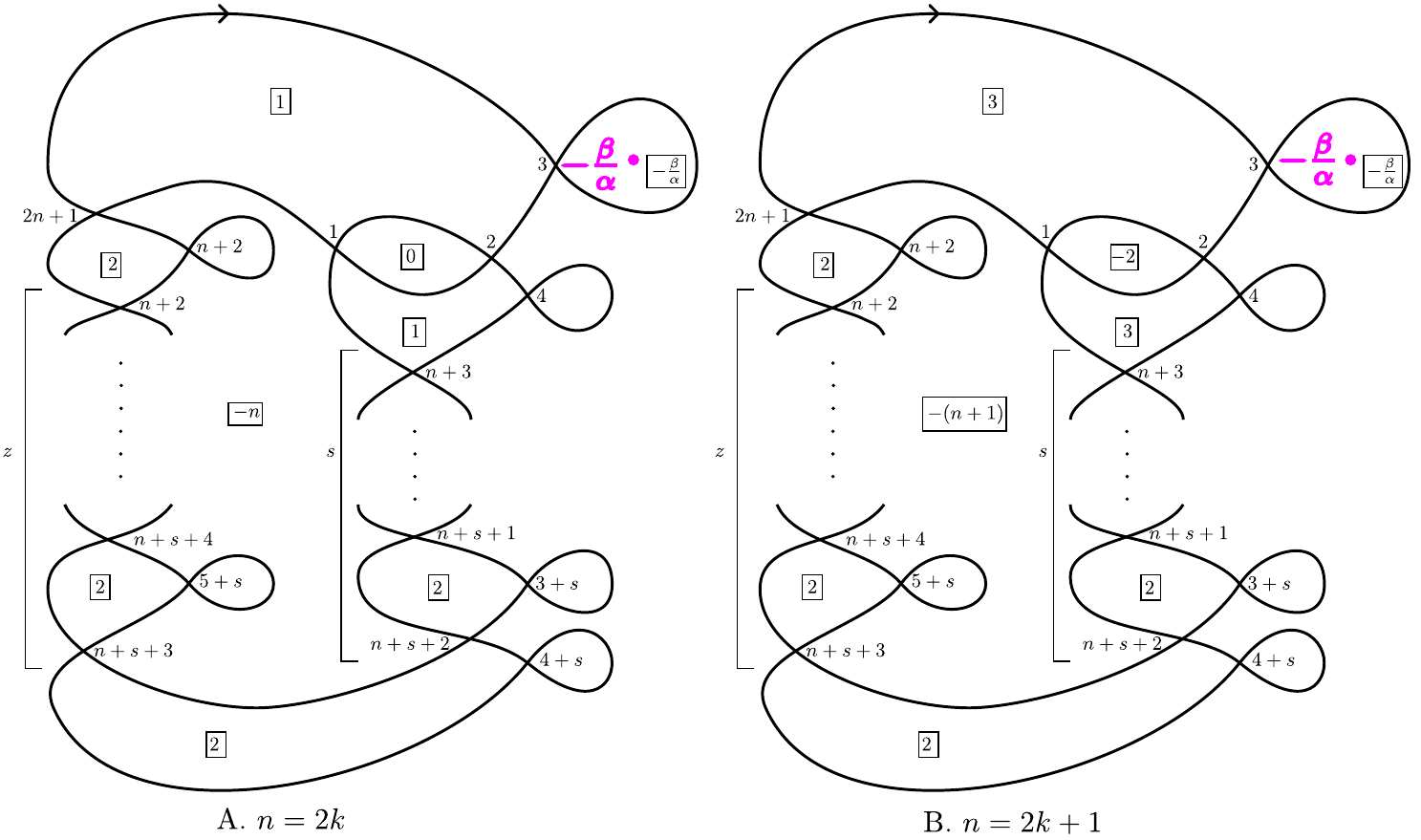}
    \caption{Legendrians $\Lambda'(z,s)$, for $z+s = n-2$, in $L(\alpha, \beta)$. The boxed numbers indicate defects of the respective regions.}
    \label{fig:n twist licata sabloff examples}
\end{figure}

\begin{figure}[h]
    \centering
    \includegraphics[width=0.7\linewidth]{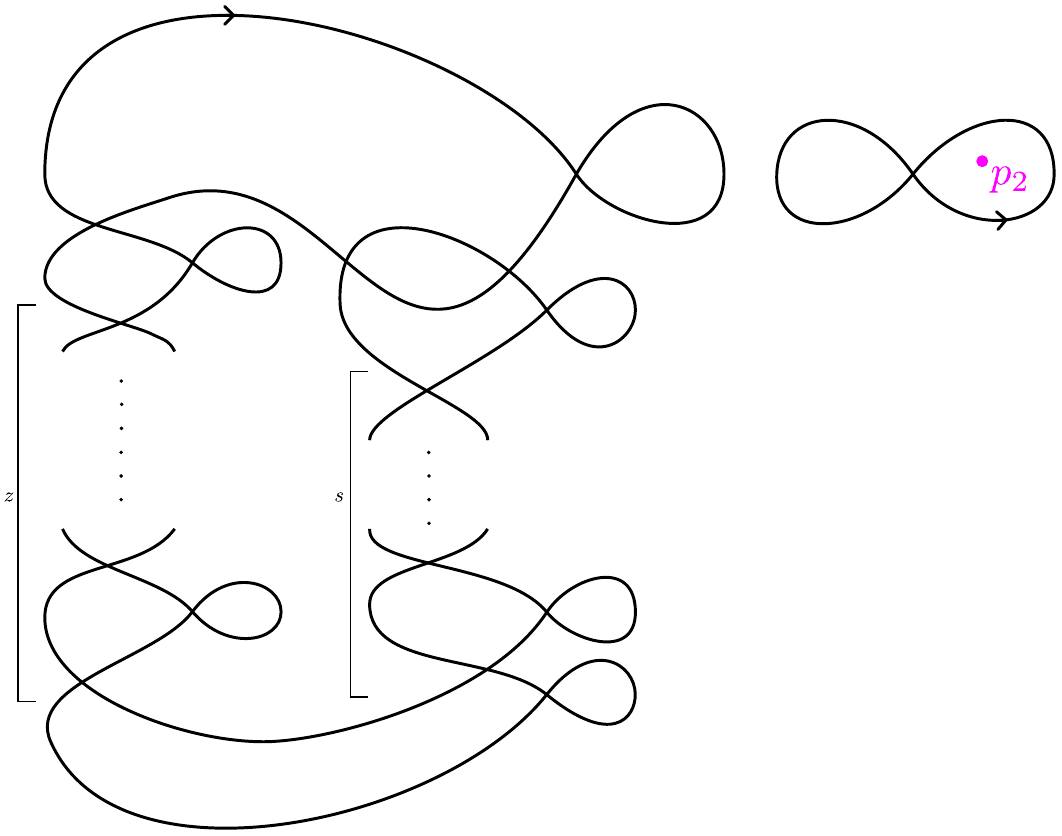}
    \caption{Prime decomposition of $\Lambda'(z,s)$: $E(z,s)$ on the left and $F$ on the right.}
    \label{fig:prime decomposition}
\end{figure}

The classical rational invariants of these knots are (see \cite{kegel_2018})
\begin{align}
    \tb_\Q(\Lambda'(z,s)) &= 1 +\frac{\alpha}{\beta} &\text{ for } k \text{ even, }\\
    \tb_\Q(\Lambda'(z,s)) &= -3 +\frac{\alpha}{\beta} &\text{ for } k \text{ odd, and }\\
    \rot_\Q(\Lambda'(z,s)) & = \frac{\alpha}{\beta}\rot(\Lambda_0) & \text{ for all }k.
\end{align}
To see this, note that the linking number between the surgery Legendrian and the Legendrian lift of $\Gamma$ in $S^3$ has linking number $1$.

As before, the grading is only well-defined in $\Z$ modulo 
\begin{equation}2r(S) + 2\mu n(S),\end{equation}
for $S$ a formal capping surface of $\Gamma$.
Note that for all the $n$-twist knots $\Lambda'(z,s)$ we described, we can choose a formal capping surface $S$ such that the total rotation $r(S) = 0$, but the defect is $n(S) = \frac{\beta}{\alpha}$. Hence, the gradings are defined modulo $2\mu \frac{\beta}{\alpha}$.

Label the generators as in Figure~\ref{fig:n twist licata sabloff examples}.
Then, for $n=2k \geq 4$, we get that the gradings for $\Lambda'(z,s)$, $0\leq z \leq n-2$, $z+s = n-2$, 
\begin{align}
    |a_{3, \dots, n+2}| & = 1, &|b_{3, \dots, n+2}| &= 2\mu-2,\\
    |a_{n+3, \dots, 2n+1}| & = 2\mu - 1, &|b_{n+3, \dots, 2n+1}|&=0,\\
    |a_1^l| & = s-z+3, &|b_1^l| &= z-s-4 + 2\mu,\\
    |a_2^l|& = z-s-1 + 2\mu, &|b_2^l| &= s-z .  
\end{align}

For $n=2k+1 \geq 3$, $\Lambda'(z,s)$, $0\leq z \leq n-2$, $z+s = n-2$, the gradings are
\begin{align}
|a_{3, \dots, n+2}| & = 1, &|b_{3, \dots, n+2}| &= 2\mu-2,\\
    |a_{n+3, \dots, 2n+1}| & = 2\mu - 1, &|b_{n+3, \dots, 2n+1}|&=0,\\
    &|a_1| = z-s, &|b_1| &= s-z+ 2\mu -1.\\
    &|a_2| = s-z, &|b_2| & = z-s + 2\mu -1.
\end{align}
Again, in each case, there exists a unique augmentation that sends the generators $b_{n+3}, \dots, b_{2n+1}$ to $1$ and the remaining generators to $0$. Once we linearize the differential using this augmentation, the corresponding Poincar\'{e} polynomials are given by
\begin{align}
      P(\Lambda(l, n-l)) = t^1 + t^{2\mu -2} + t^{|a^z_1|} + t^{|b^z_1|} +  t^{|a^z_2|} + t^{|b^z_2|},
\end{align}
which are distinct for distinct $z \leq \lceil (n + 1)/2 \rceil$. Hence, each $\Lambda(z, s)$, $z \leq \lceil (n+1)/2 \rceil$, is a distinct Legendrian.

\begin{rem}
    The Poincar\'e polynomials for the $n=5$ case look different from \cite{Licata-Sabloff_2013} due to a small computation error, namely, the Lagrangian projections in the cited paper are missing one lobe each. This error does not take away from any results in that paper or even the given example. It is still true that the two torsion knots presented are not Legendrian isotopic, and this can be concluded by noticing that in one case, there is a generator in grading $3$ that is not present in the LCH of the other. Additionally, the linearized LCH of these knots has a ``fundamental class'' in degree $1$ just as in $\R^3$.
\end{rem}

\section{Non-Simple Legendrian and Transverse Cables}
In this section, we prove Theorem~\ref{thm:cable of torus knot} and Theorem~\ref{thm:cable of twist knot}.
We first need to recall the definition of the cable of a knot and the contact width. 

A {\bf Legendrian $(p,q)$--cable} of a Legendrian knot $K$ is a Legendrian knot $K_{p,q}$ obtained by realizing a $(p,q)$--curve on the convex boundary of a standard neighbourhood of $K$ and pushing it slightly into the interior so it becomes Legendrian. When $p$ and $q$ are not co-prime, the $(p,q)$--curve is a link with $\gcd(p,q)$ components; in what follows, we assume $\gcd(p,q)=1$ so the cable is a knot. One can construct a transverse cable by taking the transverse push-off of the Legendrian cable.

The {\bf contact width $\omega(K)$} is defined as follows in \cite{EH02}: First, an embedding $\phi: S^1 \times D^2 \hookrightarrow S^3$ is said to {\bf represent} $K$ if the core curve of $\phi(S^1 \times D^2)$ is isotopic to $K$. Next, make a (somewhat nonstandard) oriented identification of $\partial(S^1 \times D^2) \simeq \R^2/\Z^2$, where the meridian has slope $0$ and the longitude (well-defined since $K$ is inside $S^3$) has slope $\infty$. Call this coordinate system $C_K$. Define
\begin{align}
    \omega(K) = \sup \frac{1}{\mathrm{slope}(\Gamma_{\partial(S^1 \times D^2)})}
\end{align}
where the supremum us taken over all embeddings $S^1 \times D^2 \hookrightarrow S^3$ representing $K$ with $\partial(S^1 \times D^2)$ convex and slope is with respect to the coordinate system $C_K$.

\begin{proof}[Proof of Theorem~\ref{thm:cable of torus knot}]
    
    Let $T_1$ be a torus knot in $\mathbb{S}^3$ and $T_2$ be a torus knot on the Heegaard torus in a Lens space $L(p,q)$. For some such torus knots, the connected sum $T_1 \# T_2$ is Legendrian and transversely non-simple. 
    
    For example, consider $T_1=T_{-5,3}$ in $\mathbb{S}^3$ and $T_2=T_{-5,2}$ in any $L(\alpha, \beta)$. Let $\Lambda_1^\pm$ be two maximum $\tb$ Legendrian representatives of $T_1$ with rotation numbers $\pm 2$, respectively, see \cite{EH02}. Let $\Lambda_2$ and $\Lambda'_2$ be two max $\tb$ representatives of $T_2$ with rotation numbers $5+2\times(\frac{1+q}{p})-4$ and $5+2\times(\frac{1+q}{p})-8$, respectively, see \cites{Ona, Zha}. 
    By Theorem 3.4 in \cite{Etnyre-Honda_2003_connect_sums}, the two connect sums 
    \begin{align}
        \Lambda_1^- \# \Lambda_2 \text{ and } \Lambda_1^+ \# \Lambda'_2,
    \end{align}
    are not Legendrian isotopic, but by using the formulas in Theorem~\ref{thm: tb under connect sum}, one can check that these have the same $\tb_\Q$ and $\rot_\Q$.

    This construction can be repeated with any other $T_i$, $i=1,2$, from classifications in \cites{Ona, Zha, EH02}, such that the $\tb_\Q$ and $\rot_\Q$ of the connected sums match, but the prime decompositions consist of non-isotopic Legendrians. The uniqueness of prime decomposition in Theorem 3.4 in \cite{Etnyre-Honda_2003_connect_sums} will imply that these connect sums are not Legendrian isotopic.

    Let $C_{p,q}(T_1 \# T_2)$ be a large positive cable of a Legendrian non-simple knot \( T_1 \# T_2 \) as constructed above. When the cabling slope of a non-simple Legendrian knot is \( \frac{q}{p} > \lceil\omega(K) \rceil\), \cite[Theorem 1.1]{CEM} implies the cable knot is again non-simple.

Consider $L_1$ and $L_2$ to be two max $\tb$ representatives of $C_{p,q}(T_1\# T_2)$. Let $S^-(L_i)$ denote negative stabilizations of $L_i$, $i=1,2$. These stabilizations $S^-(L_i)$ are not Legendrian isotopic by \cite[Theorem 3.4]{Etnyre-Honda_2003_connect_sums} as stabilizations can be realised as connected sums.
Consider transverse pushoffs $T(S^-(L_i))$, $i=1,2$, of these non-isotopic Legendrians. The following Theorem~\ref{thm:classification of transverse equiv to leg upto stabilization} from \cites{EFM, EH03} implies $T(S^-(L_i))$, $i=1,2$ are not transverse isotopic.
\begin{thm}\cites{EFM, EH03}\label{thm:classification of transverse equiv to leg upto stabilization}
 The classification of transverse knots up to transverse isotopy is equivalent to the classification of Legendrian knots up to negative stabilization and Legendrian isotopy.
\end{thm}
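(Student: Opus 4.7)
The plan is to interpret the statement as the claim that the positive transverse pushoff operation $T^+$ induces a bijection between the set of Legendrian knots modulo Legendrian isotopy and negative stabilization, and the set of transverse knots modulo transverse isotopy. I would establish this by verifying well-definedness, surjectivity, and injectivity of $T^+$ separately.

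First I would verify well-definedness. Given a Legendrian $\Lambda \subset (M,\xi)$, pick a vector field along $\Lambda$ lying in $\xi$, transverse to $T\Lambda$, and oriented so that the resulting pushoff is positively transverse; different choices produce transverse-isotopic knots since the space of such vector fields is connected. A Legendrian isotopy extends to an ambient contact isotopy, which carries transverse pushoffs to transverse pushoffs. The essential local computation, carried out in a Darboux chart, is that a negative stabilization $S^-(\Lambda)$ introduces a small zigzag whose positive transverse pushoff is visibly transverse-isotopic to that of $\Lambda$, whereas a positive stabilization changes the self-linking number by $-2$ and so alters the transverse class. Hence $T^+$ factors through the equivalence on the left.

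For surjectivity, I would use the standard neighborhood theorem for transverse knots to place any transverse knot $K$ in the model $(S^1 \times D^2, dz + r^2\, d\theta)$ with $K = \{r = 0\}$. One then constructs an explicit Legendrian $\Lambda_\varepsilon$ in this neighborhood, for instance a small Legendrian supported in $\{r \leq \varepsilon\}$ obtained from a suitable closed curve in the characteristic foliation of a convex torus, whose positive transverse pushoff is transverse-isotopic to $K$. This is equivalent to the classical Legendrian approximation theorem: any $C^0$-small perturbation of a transverse knot within its standard neighborhood can be realized as a Legendrian whose transverse pushoff recovers $K$.

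The hard part will be injectivity. Suppose $\Lambda_1, \Lambda_2$ are Legendrians with transverse-isotopic positive pushoffs. After an ambient contact isotopy I may assume both pushoffs coincide pointwise with a fixed transverse knot $K$, so that $\Lambda_1$ and $\Lambda_2$ both lie in an arbitrarily thin standard neighborhood $N$ of $K$. Each $\Lambda_i$ is then a Legendrian divide on a convex torus inside $N$, and the key geometric observation is that pushing $\Lambda_i$ onto a thinner nested convex torus corresponds precisely to applying a negative stabilization while keeping the positive transverse pushoff fixed. Using Honda's classification of tight contact structures on the solid torus and the Farey-tessellation calculus of basic slices, after a finite and controlled number of negative stabilizations the images of $\Lambda_1$ and $\Lambda_2$ land on a common convex torus and become Legendrian isotopic there. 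The careful tracking of dividing set slopes, to ensure that the needed stabilizations are negative rather than positive, is the main technical obstacle of the proof.
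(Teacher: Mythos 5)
The paper does not prove this statement at all: it is quoted as a known theorem with citations to Epstein--Fuchs--Meyer and Etnyre--Honda and used as a black box inside the proof of Theorem~1.3, so there is no in-paper argument to compare against. Your sketch is, in outline, the standard proof from those references, and the division into well-definedness, surjectivity (Legendrian approximation), and injectivity is the right one. The sign bookkeeping is consistent: $\sln(T^+(\Lambda)) = \tb(\Lambda) - \rot(\Lambda)$ is preserved by $S_-$ and drops by $2$ under $S_+$, and the local zig-zag computation shows the positive pushoff itself is unchanged up to transverse isotopy. Two cautions on the injectivity step, which you correctly identify as the substantive one. First, in the model $(S^1\times D^2, \ker(dz + r^2\,d\theta))$ the tori $\{r=c\}$ carry linear characteristic foliations, i.e.\ they are pre-Lagrangian rather than convex, and the cited arguments compare Legendrian approximations realized as leaves of these foliations on nested tori, computing directly that passing to a smaller radius whose foliational slope differs by one integer step effects exactly one negative stabilization; your phrasing via convex tori and Legendrian divides can be made to work after a perturbation but adds bookkeeping and is not literally the situation in the standard neighborhood. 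Second, you still need to (a) contact-isotope each $\Lambda_i$ into an arbitrarily thin standard neighborhood of the common transverse knot $K$ --- this is where the hypothesis that the pushoffs are transversely isotopic, and not merely have equal $\sln$, is genuinely used --- and (b) show that two Legendrian leaves of the same slope on the same pre-Lagrangian torus are Legendrian isotopic in the ambient manifold. With those points filled in, your outline is correct and is essentially the argument of the papers the theorem is cited from.
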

\end{proof}

\begin{proof}[Proof of Theorem~\ref{thm:cable of twist knot}]
    Consider Legendrian non-isotopic $\Lambda(z,s)$, $0\leq z \leq n$, $z+s = n-2$, $n\geq 3$, from Theorem~\ref{thm:prime legendrian examples}. Sufficiently large positive cables $C_{p,q}(\Lambda(z,s))$, namely when the cabling slope \( \frac{q}{p} > \lceil\omega(\Lambda(z,s)) \rceil\), $0\leq z \leq n$, $z+s = n-2$, are also not Legendrian isotopic.

Similarly, sufficiently large positive cables of the non-isotopic knots $\Lambda'(z,s)$, $0\leq z \leq n$, $z+s = n-2$, , $n\geq 3$ give another class of non-isotopic Legendrians.
\end{proof}

\bibliographystyle{amsplain}
\bibliography{references.bib}
\vspace{10pt}
\end{document}